\documentclass[12pt,reqno]{amsart}

\usepackage{mathrsfs}
\usepackage{amsgen}
\usepackage{amscd}
\usepackage{amsmath}
\usepackage{latexsym}
\usepackage{amsfonts}
\usepackage{amssymb}
\usepackage{amsthm}
\usepackage{graphicx}
\usepackage{color}
\usepackage{amsthm,amssymb}
\usepackage{graphicx}
\usepackage{enumerate}
\usepackage{amssymb,amsmath,graphicx,amsfonts,euscript}
\usepackage{color}
\usepackage{mathrsfs}
\usepackage{empheq}
\usepackage{cases}
\usepackage{cite}
\usepackage[colorlinks=true,backref]{hyperref}

\usepackage[colorlinks=true,backref]{hyperref}
\hypersetup{urlcolor=blue, citecolor=blue, linkcolor=black}

\usepackage[margin=3cm, a4paper]{geometry}

\newtheorem{thm}{Theorem}[section] 
\newtheorem{cor}[thm]{Corollary}
\newtheorem{lem}[thm]{Lemma}
 \newtheorem{prop}[thm]{Proposition}
\theoremstyle{remark} 
\newtheorem{rem}[thm]{Remark}

\newcommand{\EQ}[1]{\begin{align*}\begin{split} #1 \end{split}\end{align*}}
\newcommand{\EQn}[1]{\begin{align}\begin{split} #1 \end{split}\end{align}}
\newcommand{\EQnn}[1]{\begin{align} #1 \end{align}}

\setlength{\marginparwidth}{2cm}

\newcommand{\Del}[1]{}


\def\norm#1{\left\|#1\right\|}
\def\normo#1{\|#1\|}
\def\normb#1{\big\|#1\big\|}

\def\abs#1{|#1|}
\def\aabs#1{\left|#1\right|}
\def\absb#1{\big|#1\big|}

\def\brk#1{\left(#1\right)}
\def\brko#1{(#1)}
\def\brkb#1{\big(#1\big)}

\def\fbrk#1{\left\lbrace#1\right\rbrace} 
\def\fbrkb#1{\big\lbrace#1\big\rbrace} 
\def\jb#1{\langle#1\rangle}

\def\wt#1{\widetilde{#1}}
\def\wh#1{\widehat{#1}}
\def\wb#1{\overline{#1}}


\def\pdt{\partial_{t}}

\newcommand{\ra}{{\rightarrow}}

\def\loe{\leqslant}
\def\goe{\geqslant}
\def\lsm{\lesssim}
\def\gsm{\gtrsim}

\newcommand{\N}{{\mathbb N}}

\newcommand{\R}{{\mathbb R}}
\newcommand{\C}{{\mathbb C}}
\newcommand{\Z}{{\mathbb Z}}

\newcommand{\F}{{\mathcal{F}}}

\newcommand{\E}{{\mathcal{E}}}


\def\dx{\text{\ d} x}
\def\ds{\text{\ d} s}
\def\dt{\text{\ d} t}


\def\ep{\varepsilon}
\def\al{\alpha}

\def\ph{\varphi}

\def\de{\delta}
\def\De{\Delta}

\def\la{\lambda}
\def\ga{\gamma}

\newcommand{\I}{\infty}
\def\rev#1{\frac{1}{#1}}
\def\half#1{\frac{#1}{2}}

\def\vl{v_{\mathrm{li}}}
\def\vnl{v_{\mathrm{nl}}}

\numberwithin{equation}{section} 

\allowdisplaybreaks[4]

\begin{document}
\title[3D NLS]{Global well-posedness and scattering of 3D defocusing, cubic Schr\"odinger equation}
\subjclass[2010]{35Q55, 35B40}
\keywords{Schr\"odinger equation, Global well-posedness, Scattering}

\author{Jia Shen}
\address{(J. Shen) Center for Applied Mathematics\\
	Tianjin University\\
	Tianjin 300072, China}
\email{shenjia@tju.edu.cn}

\author{Yifei Wu}
\address{(Y.Wu) Center for Applied Mathematics\\
Tianjin University\\
Tianjin 300072, China}
\email{yerfmath@gmail.com}
\thanks{}

\date{}

\begin{abstract}
In this paper, we study the global well-posedness  and scattering of 3D defocusing,  cubic Schr\"odinger equation. Recently,
Dodson \cite{Dod20NLS} studied the global well-posedness  in a critical Sobolev space $\dot{W}^{11/7,7/6}$.  In this paper, we aim to show that if the initial data belongs to $\dot H^\frac12$ to guarantee the local existence, then some extra weak space which is supercritical, is sufficient to prove the global well-posedness. More precisely, we prove that if the initial data belongs to $\dot{H}^{1/2}\cap \dot{W}^{s,1}$ for $12/13<s\loe 1$, then the corresponding solution exists globally and scatters. 

\end{abstract}

\maketitle

\section{Introduction}
We study the non-linear Schr\"odinger equation (NLS)
\begin{equation}
\label{eq:nls}
\left\{ \aligned
&i\pdt u + \De u =  \mu|u|^p u, \\ 
& u(0,x) = u_0(x),
\endaligned
\right.
\end{equation}
where $u(t,x):\R\times\R^d\ra \C$, $\mu=\pm1$ and $p>0$. $\mu=1$ is \textit{defocusing} case, and $\mu=-1$ is \textit{focusing} case. The equation \eqref{eq:nls} has conserved mass
\EQn{
	M(u(t)) :=\int_{\R^d} \abs{u(t,x)}^2 \dx=M(u_0),
}
and energy
\EQn{
	E(u(t)) :=\int_{\R^d} \half 1\abs{\nabla u(t,x)}^2 \dx + \mu \int_{\R^d} \rev{p+2} \abs{u(t,x)}^{p+2} \dx=E(u_0).
}
The equation \eqref{eq:nls} also has rescaled solution
\EQn{\label{scaling}
	u_\la(t,x) = \la^{2/p} u(t/\la^2,x/\la).
}
The scaling  leaves  $\dot{H}^{s_{c}}, s_c=\frac d2-\frac 2p$ norm invariant, that is,
$
\|u(0)\|_{\dot H_x^{s_{c}}}=\|u_{\lambda}(0)\|_{\dot H_x^{s_{c}}}.
$
In this sense, it has critical Sobolev space $\dot{H}_x^{s_c}$. Therefore, according to the conservation law, the equation is called mass critical when $p=\frac 4d$, and energy critical when $p=\frac4{d-2}$.

The local well-posedness for \eqref{eq:nls} is well-understood, which can be found in Cazenave and Weissler \cite{CW1}. 
Moreover, there are extensive studies on the large data global well-posedness and scattering for \eqref{eq:nls}. Let us first mention the results in energy space when $\mu=1$. In energy subcritical and mass supercritical case, Ginibre and Velo \cite{GV85JMPA} proved the large data global well-posedness and scattering in energy space for $d\goe 3$. Bourgain \cite{Bou99JAMS} introduced the induction on energy method to study the 3D quintic energy critical NLS and proved global well-posedness and scattering in $H^1(\R^3)$ for radial data.  Nakanishi \cite{Nak99JFA} used this method and introduced a new kind of modified Morawetz estimate to obtain the energy scattering for $p>4/d$ when $d=1$ and $d=2$. Bourgain's result was extended to the non-radial case by Colliander, Keel, Staffilani, Takaoka and Tao \cite{Iteam08Annals}, for which a key ingredient is the interaction Morawetz estimate introduced in \cite{Iteam04CPAM}. Energy critical case in higher dimensions was solved by \cite{RV07AJM,Vis07Duke}. Secondly, focusing equations has special solutions constructed from ground state, which does not scatter. Kenig and Merle \cite{KM06Invent} introduced the concentration compactness method to study the radial focusing energy critical NLS in $\dot{H}^1$ when $d=3,4,5$. They proved the ``ground state theorem", namely below the energy of ground state, the solution with positive virial is global well-posed and scatters, and the solution with negative virial blows up. Killip and Visan \cite{KV10AJM} obtained the scattering below the ground state for non-radial energy critical NLS in dimensions five and higher, and dimension $d=4$ case was solved by Dodson \cite{Dod19ASENS}.  In mass critical case, authors further studied the global well-posedness and scattering purely in $L^2$. In radial case, it was solved by Killip, Tao and Visan \cite{KTV10JEMS} for $d=2$, and by Tao, Visan, Zhang \cite{TVZ07Duke}, and Killip, Visan and Zhang \cite{KVZ08APDE} for $d\goe 3$. The defocusing non-radial case for all dimensions were proved by Dodson \cite{Dod12JAMS,Dod16Duke,Dod16AJM}. Furthermore, Dodson \cite{Dod15Adv}  studied the focusing case with initial data below the mass of the ground state in all dimensions.

Next, we review some of the results for the energy subcritical and mass supercritical case. As the typical model, we consider the 3D cubic NLS as follows, 
\begin{equation}
\label{eq:nls-cubic}
\left\{ \aligned
&i\pdt u + \De u =  \mu|u|^2 u, \\ 
& u(0,x) = u_0(x),
\endaligned
\right.
\end{equation}
In the defocusing case, as described above, global and scattering in energy space was proved by Ginibre and Velo in \cite{GV85JMPA}. In the focusing case, global well-posedness and scattering in energy space  below the ground state was proved by Duyckaerts, Holmer and Roudenko in \cite{HR08CMP,DHR08MRL}. For negative virial  data,  blowing-up solution in energy space was established  in \cite{Glassey77, OgTs91, HoRo2, DuWuZhang}.  Below the energy space, Bourgain \cite{Bou98JAM} obtained global well-posedness and scattering in $H^s$ with $s>11/13$ in the defocusing case. The result was further extended to $s>5/6$ in \cite{Iteam02MRL}, $s>4/5$ in \cite{Iteam04CPAM} and $s>5/7$ in \cite{Su12MRL}. With radial assumption, Dodson \cite{Dod18CJM} obtained the result almost in critical space $s>1/2$. In critical space, Kenig and Merle \cite{KM10TranAMS} proved global well-posedness and scattering for defocusing case with assumption that the solution is uniformly bounded in critical space. More recently, Dodson \cite{Dod20NLS} studied the global well-posedness for defocusing equation in a critical Sobolev space $\dot{W}^{11/7,7/6}$.

In this paper, we intend to study the global well-posedness and scattering for defocusing 3D cubic NLS in critical $\dot{H}^{1/2}$ space intersected with a supercritical space. The main result is
\begin{thm}\label{thm:main}
	Let $\mu=1$ and $1\goe s> 12/13$. Suppose that $u_0\in \dot{H}_x^{1/2}\cap \dot{W}_x^{s,1}$, then the solution $u$ of equation $\eqref{eq:nls-cubic}$ exists globally and scatters.
\end{thm}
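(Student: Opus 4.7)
The plan is to reduce the theorem to a global space-time bound of the form $\normo{u}_{L^4_{t,x}(\R\times\R^3)}\loe C(\normo{u_0}_{\dot H^{1/2}},\normo{u_0}_{\dot W^{s,1}})$, from which global well-posedness and scattering in $\dot H^{1/2}$ follow by the standard subcritical perturbation theory combined with the $\dot H^{1/2}$ local theory of Cazenave--Weissler \cite{CW1}. In this scheme, the $\dot H^{1/2}$ hypothesis only supplies a local solution; the scattering is driven entirely by the subcritical input $u_0\in\dot W^{s,1}$.

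To exploit the subcritical input, I would decompose $u=\vl+\vnl$, where $\vl(t):=e^{it\De}u_0$ is the linear flow and $\vnl$ is the Duhamel integral forced by $|u|^2u$. The assumption $u_0\in\dot W^{s,1}$ enters through the sharp dispersive bound $\normo{|\nabla|^s\vl(t)}_{L^\infty_x}\lsm |t|^{-3/2}\normo{u_0}_{\dot W^{s,1}}$; interpolating this pointwise-in-time decay with $L^2$-based Strichartz yields fast time decay of $\vl$ in a range of mixed Lebesgue norms, and places $\vl$ in comfortable Strichartz classes throughout the analysis. The Duhamel part $\vnl$ is not itself a solution of \eqref{eq:nls-cubic}, but its representation together with the cubic smoothing effect of Strichartz puts it in a regularity class close to $\dot H^1$. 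I would propagate an almost-conservation bound on a modified energy of $\vnl$ in the spirit of the $I$-method of \cite{Iteam02MRL,Iteam04CPAM}, using the decay of $\vl$ to absorb the commutator and source-type corrections that appear because $\vnl$ does not solve the equation exactly.

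The core a priori estimate would then come from a frequency-localized interaction Morawetz inequality applied to $\vnl$, in the spirit of \cite{Dod20NLS}, combined with the $L^p$ decay of $\vl$ from the previous step. This should yield the uniform $L^4_{t,x}$ control, upon which Strichartz and persistence of regularity close the argument and upgrade it to scattering in $\dot H^{1/2}$.

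The main obstacle is closing the Morawetz step cleanly: the multilinear error terms coupling $\vl$ and $\vnl$ are time-integrable only when the dispersive decay of $\vl$ beats the polynomial-in-time growth of the modified energy of $\vnl$ by a margin tied to the $\dot W^{s,1}$ scaling. Book-keeping the scaling exponents in these error terms should identify the threshold precisely at $s=12/13$: for $s\loe 12/13$ the relevant time integrals diverge. Making this sharp requires a bespoke choice of Strichartz norms that captures simultaneously the $|t|^{-3/2}$ decay of $\vl$ and the mild growth of $\vnl$, together with matching multilinear estimates for the cubic nonlinearity; the nontrivial interplay between critical $\dot H^{1/2}$ quantities and subcritical $\dot W^{s,1}$ quantities in the Morawetz identity is where I expect the analysis to be most delicate.
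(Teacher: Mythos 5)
Your plan shares the right philosophy with the paper --- exploit the extra dispersive decay of a ``linear-like'' piece to control an energy-class remainder via a modified energy and interaction Morawetz, and you even correctly anticipate the Dodson-style energy correction and the threshold $s=12/13$ --- but the decomposition you propose has two genuine gaps that the paper's decomposition is specifically engineered to avoid.

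First, you split $u=\vl+\vnl$ with $\vl=e^{it\De}u_0$ the full linear flow and $\vnl$ the full Duhamel integral, and claim Strichartz smoothing puts $\vnl$ ``close to $\dot H^1$.'' This does not hold for $\dot H^{1/2}$ data: the Duhamel integral of $|u|^2u$ with $u\in\dot H^{1/2}$ gives back $\dot H^{1/2}$ regularity, not $\dot H^1$, so there is no a priori source of energy-class regularity for your $\vnl$. The paper avoids this by frequency-truncating the initial data: the small piece $v_0=P_{\geq N_0}u_0$ goes into $v$, while $w_0 = P_{\leq N_0}u_0$ is automatically in $\dot H^1$ with $\|w_0\|_{\dot H^1}\lsm N_0^{1/2}\|u_0\|_{\dot H^{1/2}}$ (see \eqref{eq:initialbound-w}). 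Without this frequency split, the whole energy argument lacks a starting point at $t=0$. Second, the local upgrade of $w$ to $\dot H^1$ on $[0,3]$ (Proposition \ref{prop:local-w}) is the principal technical obstacle and needs the multi-scale bilinear Strichartz estimate of Candy (Lemmas \ref{lem:bilinearstrichartz-origin}--\ref{lem:bilinearstrichartz}) to beat the half-derivative loss; your proposal does not address this step at all, and interpolation of dispersive decay with Strichartz cannot substitute for it near $t=0$ where $|t|^{-3/2}$ is useless.

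Beyond those gaps, the paper's $v$ also solves a \emph{time-truncated} nonlinear equation $i\pdt v+\De v=\chi(t)|v|^2v$, so that for $t\geq 3$ it is exactly free and the $|t|^{-3/2}$ decay can be used without any Duhamel tail; your $\vl$ is free everywhere, which is fine, but then the coupling terms in the $\vnl$-equation contain $\vl$-cubed contributions for all time that you would need to absorb. Finally, a small point on the conclusion: the paper does not reduce to a global $L^4_{t,x}$ bound directly; it establishes a uniform $\dot H^{1/2}$ bound from Proposition \ref{prop:globalbound-w-energy} together with \eqref{eq:globalbound-v-subonehalfregularity} and \eqref{eq:globalbound-w-l2}, and then invokes the Kenig--Merle conditional scattering theorem \cite{KM10TranAMS}. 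Your $L^4_{t,x}$ route is not unreasonable, but it requires a separate continuity/persistence argument and is not what the paper does.
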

\begin{rem}\label{rem:mainresut}
	The followings are some remarks related to the theorem.
\begin{enumerate}
		\item 
		Our main purpose is to show that if the initial data belongs to $\dot H_x^{1/2}$ to guarantee the local existence, then some extra weak space, likes $\dot W^{s,1}$ which is supercritical space under the scaling, is sufficient to prove the global well-posedness.
		\item 
		We mainly concern the rough data case when the derivative index $s<1$. In this case, the main obstruction is the lack of any $\nabla$-estimate for the linear solution. Then, the two key ingredients are as follows. First, we invoke the bilinear Strichartz estimate suitably to lower down the derivative of the linear solution, but it costs the increase of the energy bound. To overcome the difficulty, we also derive a global space-time estimate $L_t^q L_x^r$ with $r>6$, see \eqref{eq:globalbound-w-l2infty}. These tools allow us to control the growth of the energy under the rough data assumption.
		\item 
		The scattering statement is also  included. Indeed, by the uniform boundedness in $\dot H_x^{1/2}$ obtained below, it is an immediate consequence of the result of Kenig and Merle \cite{KM10TranAMS}. 
		\item 
		Lastly, we remark that the lower bound $12/13$ for the regularity $s$ of the space $\dot W^{s,1}$ is not optimal, which can be improved by more delicate analysis, see \eqref{eq:globalbound-w-full} below. In particular, we can further lower down the required regularity by $I$-method in \cite{Iteam04CPAM}. However, in this paper we are not going to pursue this optimality. We conjecture that if $u_0\in \dot{H}^{1/2}\cap L^1$, then the corresponding solution of equation $\eqref{eq:nls-cubic}$ with $\mu=1$ exists globally and scatters. 
\end{enumerate}
\end{rem}

The method in  \cite{Dod20NLS} was based on the fact that if the initial data belongs to $W^{s,p}$ for $p<2$, then the linear flow becomes more regular by the dispersive equation when the time is away from zero. The similar observation was made in the earlier paper  \cite{BDSW}, in which Beceanu, Deng, Soffer, and the second author showed that if the initial data is radial and compactly supported, then the linear flow becomes smoother when the time is away from zero. Due to this, they proved the global well-posedness in mass-subcritical case when the initial data $u_0\in   \dot H_x^{s_c}$, radial and compactly supported. This fact also plays a crucial role in our argument, see Lemma \ref{lem:globalbound-v} below for the precise statement.   

Let us explain the main idea of the proof in this paper. We use the argument in \cite{BDSW}, in which a strategy of ``time-cutoff'' equation's decomposition  was given. More precisely, we split the solution $u=v+w$ with $v$ satisfying 
$$
i\partial_tv+ \De v =  \chi_{\le1}(t)|v|^2 v.
$$  
Then we can prove that $v$ keeps the properties as the linear flow.  Furthermore, $w$ satisfies 
$$
i\partial_tw+ \De w =  \chi_{\gtrsim1}(t)|v|^2 v+O(wu^2).
$$
As the heart of the whole analysis, benefiting from that $v$ is more regular in $t\gtrsim 1$, we can transfer the additional regularity from $v$ to $w$ by bootstrap. However, the  implementation of  such ideas is quite challenging because of the rough data. 

The first main obstruction is to improve the regularity of the solution $w$ when it transits from $t=0$ to $t=1$, we choose various appropriate function spaces to bootstrap. This is analogue to the works that studied the NLS at critical regularity, in which the Gronwall's inequalities were employed suitably to gain the regularity of the solution, see, e.g., \cite{KTV10JEMS,KVZ08APDE}.
To do this, the analysis is subtle and the estimates should be critical.   More precisely, to establish the $H^1$-estimate of $w$ in local time for $H^\frac12$ data, there is a half of derivative loss, and we use the bi-linear Strichartz estimate to save the regularity. In particular, a multi-scale bi-linear Strichartz estimate established very recently by Candy \cite{Can19MathAnn} shall be applied to overcome the difficulties. 

More precisely, the  bi-linear Strichartz estimate in the form of
\EQn{\label{eq:bilinear}
\norm{[e^{it\De}\phi] [e^{\pm it\De}\psi]}_{L_t^q L_x^r(\R\times\R^d)}
}
with particular frequency restrictions on $\phi,\psi$ plays a crucial role in this paper. $L_{t,x}^2$ bi-linear estimate for Schr\"odinger equation was first introduced by Bourgain \cite{Bou98IMRN} in 2D case, and was further extended in \cite{Iteam08Annals} and \cite{Vis07Duke}. This kind of estimates was widely used in the theory of critical non-linear Schr\"odinger equations, see for examples \cite{Iteam08Annals,Vis07Duke,KTV10JEMS,KVZ08APDE,Dod12JAMS,Dod15Adv,Dod16AJM,Dod16Duke}. For $q,r\loe 2$, \eqref{eq:bilinear} has close relation with bi-linear restriction estimate for paraboloid, which was extensively studied before. Wolff proved bi-linear restriction estimate for cone in  \cite{Wol01Annals} with $q=r>\frac{d+3}{d+1}$, and then Tao \cite{Tao03GAFA} extended the result to paraboloid case. Moreover, bi-linear restriction estimate is an important technique towards the linear restriction problems, see \cite{TVV98JAMS}. We refer the reader to Mattila's book \cite{Mat15book} for more references of bi-linear restriction theory. In this paper, we are going to use the multi-scale version of bi-linear restriction estimate by Candy \cite{Can19MathAnn}. To be more precise, in the proof of local result, in order to close the contraction mapping, we combine $L_t^{1+}L_x^2$ and the classical $L_{t,x}^2$ estimates to obtain additional regularity for the low frequency summation.

The second main ingredient of this paper is that the energy of $w$ part is almost conserved for any long time. This is achieved by controlling the energy increment of the perturbed solution $w$. More precisely, the energy increment contains at least one $v$ that has subcritical estimate, which allows us to control the remaining $w$ parts with supercritical norms. Since $v$ is lack of $\dot W^{1,\I}$-estimate ( especially when we assume $s<1$), we need to use the bi-linear Strichartz estimate to lower down the regularity of $v$. However, as described in Remark \ref{rem:mainresut} (b), in the view of \eqref{eq:globalbound-w-full} and \eqref{eq:globalbound-w-full2}, the use of bi-linear Strichartz estimate will increase the energy bound dramatically for long time. To close the bootstrap argument, we interpolate the bi-linear estimate and the interaction Morawetz estimate to control the energy increment properly, see \eqref{esti:energybound-mainterm-transverse-simple-1} below.

\textbf{Organization of the paper.}  In Section 2, we give some preliminaries. This includes
notations and some useful lemmas. In Section 3, we give the subcritical estimate of $v$. In Section 4, we give the energy estimate of $w$, and finish the proof of Theorem \ref{thm:main}.

\section{Preliminary}
\subsection{Notations}
The followings are some notations.

$\bullet$ $\hat f$ or $\F f$ denotes the Fourier transform of $f$.

$\bullet$ For any $a\in\R$, $a\pm:=a\pm\ep$ for some small $\ep>0$. 

$\bullet$ $C>0$ denotes some constant, and $C(a)>0$ denotes some constant depending on coefficient $a$.

$\bullet$ If $f\loe C g$, we write $f\lsm g$. If $f\loe C g$ and $g\loe C f$, we write $f\sim g$. Suppose further that $C=C(a)$ depends on $a$, then we write $f\lsm_a g$ and $f\sim_a g$, respectively. If $f\loe 2^{-5}g$, we denote $f\ll g$. The notation $f\gg g$ is defined similarly.

$\bullet$ $\abs{\nabla} := \F^{-1}|\xi|\F $ and $\abs{\nabla}^s:=\F^{-1}|\xi|^s\F $.  

$\bullet$ Take a cut-off function $\chi\in C_{0}^{\infty}(0,\infty)$ such that $\chi(r)=1$ if $r\loe1$ and $\chi(r)=0$ if $r>2$. For $N\in 2^\Z$, let $\chi_N(r) = \chi(N^{-1}r)$ and $\phi_N(r) =\chi_N(r)-\chi_{N/2}(r)$. We define the Littlewood-Paley dyadic operator $f_{\loe N}=P_{\loe N} f := \mathcal{F}^{-1}\brko{ \chi_N(|\xi|) \hat{f}(\xi)}$ and $f_{N}= P_N f := \mathcal{F}^{-1}\brko{ \phi_N(|\xi|) \hat{f}(\xi)}$. We also define that $f_{\goe N}=P_{\goe N} f := f- P_{\loe N} f$, $f_{\ll N}=P_{\ll N} f :=  P_{\loe 2^{-5} N} f$, $f_{\gsm N}=P_{\gsm N}f:=P_{\goe 2^{-5} N}f$, $f_{\lsm N}=P_{\lsm N}f:=P_{\loe 2^{5} N}f$, and $f_{\sim N}=P_{\sim N}:= P_{\loe 2^5 N} f - P_{\loe 2^{-5}N} f$.

$\bullet$ Let $\mathcal S(\R^d)$ be the Schwartz space and $\mathcal S'(\R^d)$ be the tempered distribution space. $L^p(\R^d)$ denotes the usual Lebesgue space. We define the homogeneous Sobolev and Besov spaces
\EQ{
&\dot W^{s,p}(\R^d) := \fbrkb{f \in\mathcal S'(\R^d): \sum_{N\in2^\Z} N^sP_Nf\text{ converges in }\mathcal S'\text{ to a $L^p$-function}}, \\
&\quad\quad\quad\dot B^s_{p,q}(\R^d) := \fbrkb{f \in\mathcal S'(\R^d): \norm{f}_{\dot B^s_{p,q}} := \normb{N^s\norm{f_N}_{L_x^p(\R^d)}}_{l_N^q(2^\Z)} }.
}
Moreover, the Sobolev norm is defined by $\norm{f}_{\dot{W}^{s,p}(\R^d)}:= \norm{\sum_{N\in2^\Z} N^sP_Nf}_{L^p(\R^d)}$. We know from the Littlewood-Paley theory that for $1<p<\I$, $\norm{f}_{\dot{W}^{s,p}(\R^d)}\sim \norm{|\nabla|^sf}_{L^p(\R^d)}$. We denote that $\dot{H}^s(\R^d):=\dot{W}^{s,2}(\R^d)$. We also define $\jb{\cdot,\cdot}$ as real $L^2$ inner product:
\EQ{
\jb{f,g} = \Re\int f(x)\wb{g}(x)\dx.
}

$\bullet$ For any time interval $I\subset\R$, we denote $L_t^q \dot{W}_x^{s,r}(I):=L_t^q \dot{W}_x^{s,r}(I\times\R^3)$ for short. 

$\bullet$ For any $0\loe\gamma\loe1$, we call that the exponent pair $(q,r)\in\R^2$ is $\dot H^\ga$-$admissible$, if $\frac{2}{q}+\frac{3}{r}=\half 3-\ga$, $2\loe q\loe\I$, and $2\loe r<\I$. If $\ga=0$, we say that $(q,r)$ is $L^2$-$admissible$.

$\bullet$ For functions $f,g,h$, we use the notation
\EQ{
O(fg):=c_1 fg + c_2f\wb{g} + c_3 \wb{f}g +c_4 \wb{f}\wb{g}, 
}
for some $c_i\in \C$, $i=1,2,3,4$. We also set
$O(fgh):=O((fg)h)$, and $O(f+g):=O(f) +O(g)$.

\subsection{Useful lemmas}
In this subsection, we gather some useful results.
\begin{lem}[Strichartz estimate,\cite{KT98AJM}]\label{lem:strichartz}
	Suppose that $(q,r)$ and $(\wt{q},\wt{r})$ are $L^2$-admissible.
	Then, we have
	\EQn{\label{eq:strichartz-1}
		\norm{ e^{it\De}\ph}_{L_t^qL_x^r(\R)} \lsm \norm{\ph}_{L_x^2},
	}
and
	\EQn{\label{eq:strichartz-2}
		\normb{\int_0^t e^{i(t-s)\De} F(s)\ds}_{L_t^qL_x^r(\R)} \lsm \norm{F}_{L_t^{\wt{q}'} L_x^{\wt{r}'}(\R)}.
	}
Furthermore, by   the Bernstein inequality and \eqref{eq:strichartz-1}, for any $0<\ep<\frac{1}{10}$,
\EQn{\label{eq:strichartz-3}
\brkb{\sum_{N\in2^\Z} N\norm{P_N e^{it\De} \ph}_{L_t^2L_x^\I(\R)}^2 + N^{4\ep-1} \norm{P_N e^{it\De} \ph}_{L_t^{1/\ep}L_x^\I(\R)}^2}^{\half 1}\lsm \norm{\ph}_{\dot H_x^{1}}.
}
\end{lem}

\begin{lem}[Schur's test]\label{lem:schurtest}
	For any $a>0$, sequences $\fbrk{a_N}$,  $\fbrk{b_N}\in l_{N\in2^\Z}^2$, we have
	\EQn{\label{eq:schurtest}
	\sum_{N_1,N\in2^\Z:N_1\loe N} \brkb{\frac{N_1}{N}}^a a_N b_{N_1} \lsm \norm{a_N}_{l_N^2} \norm{b_N}_{l_N^2}.
	}
\end{lem}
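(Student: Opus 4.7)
The plan is to reduce the inequality to a textbook application of Cauchy–Schwarz (equivalently, Schur's test), using only the fact that the dyadic geometric factor $(N_1/N)^a$ is summable in each variable separately whenever $a>0$.

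First I would split the symmetric weight by writing $(N_1/N)^a = (N_1/N)^{a/2}\cdot(N_1/N)^{a/2}$ and distribute one factor to $a_N$ and the other to $b_{N_1}$. Applying Cauchy–Schwarz on the double sum over $\{(N,N_1)\in 2^{\Z}\times 2^{\Z}:N_1\le N\}$ then gives
\EQ{
\sum_{N_1\le N}\brkb{\frac{N_1}{N}}^a a_N b_{N_1}
\loe \brkbb{\sum_{N_1\le N}\brkb{\frac{N_1}{N}}^a a_N^2}^{1/2}
\brkbb{\sum_{N_1\le N}\brkb{\frac{N_1}{N}}^a b_{N_1}^2}^{1/2}.
}

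Next I would evaluate each factor by interchanging the order of summation. In the first factor, I freeze $N$ and sum over $N_1\le N$: writing $N_1=2^{-j}N$ with $j\goe 0$, the inner geometric series equals $\sum_{j\goe 0}2^{-ja}\lsm 1$ since $a>0$, giving $\sum_N a_N^2\lsm \norm{a_N}_{l_N^2}^2$. In the second factor, I freeze $N_1$ and sum over $N\goe N_1$: again the inner sum is a convergent geometric series bounded by a constant depending only on $a$, giving $\lsm \norm{b_N}_{l_N^2}^2$. Multiplying the two bounds yields \eqref{eq:schurtest}.

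There is really no obstacle here; the only thing to be careful about is that both directions of the geometric sum (with $N$ fixed and with $N_1$ fixed) must converge, which is precisely what the hypothesis $a>0$ ensures. One could alternatively phrase the argument as a direct invocation of Schur's test applied to the kernel $K(N,N_1)=(N_1/N)^a\cha_{N_1\loe N}$, whose row and column sums are both $\sum_{j\goe 0}2^{-ja}$, but the Cauchy–Schwarz presentation above is self-contained and equally short.
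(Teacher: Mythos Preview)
Your proof is correct and is the standard one-line Cauchy--Schwarz argument for this inequality; the paper itself states this lemma without proof, so there is nothing to compare against. The only cosmetic point is that the statement as written does not assume $a_N, b_{N_1}\goe 0$, so strictly speaking one should first replace $a_N, b_{N_1}$ by their absolute values before applying Cauchy--Schwarz, but this is harmless since the right-hand side is unchanged.
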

In this paper, we need the following multi-scale bi-linear Strichartz estimate for Schr\"odinger equation, which is a particular case of Theorem 1.2 in \cite{Can19MathAnn}: 
\begin{lem}\label{lem:bilinearstrichartz-origin}
	Let $1\loe q,r \loe 2$, $\rev q + \frac{2}{r}<2$, and suppose that $M,N\in2^\Z$ satisfy $M\ll N$. Then for any $\phi,\psi\in L_x^2(\R^3)$,
	\EQn{
	\norm{[e^{it\De}P_N\phi ][e^{\pm it\De}P_M \psi]}_{L_t^qL_x^r(\R)} \lsm &  \frac{M^{4-\frac{4}{r}-\frac{2}{q}}}{N^{1-\rev r}}\norm{P_N\phi}_{L_x^2}\norm{P_M\psi}_{L_x^2}.
	}
\end{lem}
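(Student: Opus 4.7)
The plan is to derive Lemma \ref{lem:bilinearstrichartz-origin} directly from Theorem 1.2 of Candy \cite{Can19MathAnn} by a parabolic rescaling that sends the higher frequency $N$ to unit scale. Since the excerpt explicitly presents the lemma as a particular case of that theorem, the task reduces to a bookkeeping check of the scaling exponents; no genuinely new estimate is needed.

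First, I would introduce the rescaled profiles $\Phi(x):= N^{-3/2}(P_N\phi)(x/N)$ and $\Psi(x):= N^{-3/2}(P_M\psi)(x/N)$, which preserve $L^2_x$ norms (so that $\norm{\Phi}_{L^2}=\norm{P_N\phi}_{L^2}$ and $\norm{\Psi}_{L^2}=\norm{P_M\psi}_{L^2}$) and relocate their Fourier supports to $|\xi|\sim 1$ and $|\xi|\sim M/N\ll 1$, respectively. Using the identity $e^{it\De}(P_N\phi)(x)=N^{3/2}(e^{iN^2 t\De}\Phi)(Nx)$, together with the analogue for $\psi$, and changing variables in both $t$ and $x$, one arrives at
\EQ{
\normb{[e^{it\De}P_N\phi][e^{\pm it\De}P_M\psi]}_{L_t^qL_x^r(\R)}
= N^{3-\frac{3}{r}-\frac{2}{q}}\normb{[e^{is\De}\Phi][e^{\pm is\De}\Psi]}_{L_s^qL_x^r(\R)}.
}
Applying Candy's theorem to the rescaled pair, now with higher frequency at unit scale and lower frequency at scale $M/N$, the hypothesis $1/q+2/r<2$ places us in the admissible range and yields
\EQ{
\normb{[e^{is\De}\Phi][e^{\pm is\De}\Psi]}_{L_s^qL_x^r(\R)} \lsm (M/N)^{4-\frac{4}{r}-\frac{2}{q}}\norm{\Phi}_{L_x^2}\norm{\Psi}_{L_x^2}.
}
Combining the two displays, the $N$-exponents aggregate to $3-3/r-2/q-(4-4/r-2/q)=-(1-1/r)$, which exactly reproduces the claimed constant $M^{4-4/r-2/q}/N^{1-1/r}$. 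A sanity check at the classical endpoint $(q,r)=(2,2)$ returns the Bourgain-type bound $M/N^{1/2}$ of \cite{Bou98IMRN}, which confirms that the scaling has been computed correctly.

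In this reduction, the only substantive input is Candy's theorem itself: the main difficulty — namely the transversality-based wave-packet decomposition that pins down the $M/N$ exponent $4-4/r-2/q$ at general $(q,r)$ with $1/q+2/r<2$ — is absorbed into the cited external result, leaving only the rescaling verification above to be carried out here.
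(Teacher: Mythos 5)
Your reduction is correct and essentially matches the paper's treatment: the paper offers no proof beyond the remark that the lemma ``is a particular case of Theorem 1.2 in \cite{Can19MathAnn}'', and your rescaling argument is the natural way to make that bookkeeping explicit. The scaling identities and the exponent arithmetic $3-\tfrac{3}{r}-\tfrac{2}{q}-\bigl(4-\tfrac{4}{r}-\tfrac{2}{q}\bigr)=-\bigl(1-\tfrac{1}{r}\bigr)$ check out, and your sanity check at $(q,r)=(2,2)$ recovers Bourgain's $M N^{-1/2}$ bound. One small side note: your derivation also silently repairs a typo in the lemma's right-hand side, which should read $\norm{P_N\phi}_{L_x^2}\norm{P_M\psi}_{L_x^2}$ (your $\norm{\Phi}_{L^2}\norm{\Psi}_{L^2}$ equals the corrected version).
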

Using the same argument as in \cite{Vis07Duke}, we can transfer the bi-linear estimate in Lemma \ref{lem:bilinearstrichartz-origin} from linear solutions into general functions:
\begin{lem}\label{lem:bilinearstrichartz}
	Let $I\subset\R$, $1\loe q,r \loe 2$, $\rev q + \frac{2}{r}<2$, and suppose that $M,N\in2^\Z$ satisfy $M\ll N$. Let $(\wt q,\wt r)$ be $L^2$-admissible with $\wt q'<q$. Moreover, for any $t\in I$,  $\wh{u}(t,\cdot)$ is supported on $\fbrk{\xi:\abs{\xi}\sim N}$, and $\wh{v}(t,\cdot)$ is supported on $\fbrk{\xi:\abs{\xi}\sim M}$. Then,
		\EQn{\label{eq:bilinearstrichartz}
		\norm{O\brk{uv}}_{L_t^qL_x^r(I)} \lsm &  \frac{M^{4-\frac{4}{r}-\frac{2}{q}}}{N^{1-\rev r}}\norm{u}_{S^*(I)} \norm{v}_{S^*(I)},
	}
	where $a\in I$, and
	\EQn{
		\norm{u}_{S^*(I)}:=\norm{u(a)}_{L_x^2}+\norm{(i\pdt +\De)u}_{L_t^{\wt q'}L_x^{\wt r'}(I)}.
	}
\end{lem}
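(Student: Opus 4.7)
The plan is to transfer Lemma \ref{lem:bilinearstrichartz-origin}, which is stated for free Schr\"odinger evolutions, to general functions $u, v$ via Duhamel's formula, in the spirit of the transfer argument of \cite{Vis07Duke}.

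By translation we may assume $a=0\in I$. Set $F_u:=(i\pdt+\De)u$ and $F_v:=(i\pdt+\De)v$, and write Duhamel's formula on $I$ as
\EQ{
u(t)=e^{it\De}u(0)-i\int_0^t e^{i(t-s)\De}F_u(s)\ds, \quad v(t)=e^{it\De}v(0)-i\int_0^t e^{i(t-s)\De}F_v(s)\ds.
}
Since $\wh{u}(t,\cdot)$ is supported on $\{|\xi|\sim N\}$ for each $t\in I$, the same holds for $u(0)$ and for $F_u(s)$ at every $s$, and analogously for $v$ at scale $M$. Decompose $u=u_L+u_{NL}$ and $v=v_L+v_{NL}$ into linear and Duhamel parts and expand $O(uv)$ bilinearly into the four contributions $O(u_Lv_L)$, $O(u_Lv_{NL})$, $O(u_{NL}v_L)$, $O(u_{NL}v_{NL})$. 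The linear-linear piece is handled directly by Lemma \ref{lem:bilinearstrichartz-origin} after a harmless finite dyadic decomposition of the rings $\{|\xi|\sim N\}, \{|\xi|\sim M\}$ into $P_N,P_M$ blocks, producing the desired bound with factor $\norm{u(0)}_{L^2_x}\norm{v(0)}_{L^2_x}$.

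For each of the three remaining terms, the Duhamel factor is a superposition in $s$ of linear Schr\"odinger solutions: for fixed $s$, the map $t\mapsto e^{i(t-s)\De}F_\bullet(s)=e^{it\De}[e^{-is\De}F_\bullet(s)]$ is a free evolution of the $L^2_x$ datum $e^{-is\De}F_\bullet(s)$ at the correct frequency scale. Minkowski's integral inequality (valid since $1\le q,r$) moves the $s$-integration outside the $L_t^qL_x^r$ norm; inside, for each fixed $s$, the integrand is a product of two linear Schr\"odinger solutions at the correct scales $\sim N$ and $\sim M$, so Lemma \ref{lem:bilinearstrichartz-origin} applies. Integrating the resulting $L^2_x$ norms in $s$ gives, for instance,
\EQ{
\norm{O(u_Lv_{NL})}_{L_t^qL_x^r(I)} \lsm \frac{M^{4-\frac{4}{r}-\frac{2}{q}}}{N^{1-\rev r}}\norm{u(0)}_{L^2_x}\norm{F_v}_{L^1_sL^2_x(I)},
}
and analogously for $O(u_{NL}v_L)$ and $O(u_{NL}v_{NL})$. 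Since $L^1_sL^2_x$ is the dual Strichartz space for the pair $(\infty,2)$, this already establishes the lemma when the $S^*$ norm is defined with that specific classical Strichartz pair. For a general classical Strichartz pair $(q_0,r_0)$ in the definition of $\norm{\cdot}_{S^*(I)}$, one first uses the inhomogeneous Strichartz estimate of Lemma \ref{lem:strichartz} to re-express each Duhamel contribution as a free Schr\"odinger evolution of an $L^2_x$ function of norm $\lsm\norm{u}_{S^*(I)}$, and then applies Lemma \ref{lem:bilinearstrichartz-origin} once more to conclude.

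The main technical point to watch is the retardation $s\le t$ inside the Duhamel integrals: since the bilinear exponents $(q,r)$ lie in $[1,2]$, the Christ--Kiselev lemma does not apply in this range, but the retardation can be discarded harmlessly by passing to absolute values in the $s$-integral before invoking Minkowski. The conjugation structure in $O(\cdot\cdot)$ poses no difficulty because Lemma \ref{lem:bilinearstrichartz-origin} already permits both signs $\pm$ in the free evolutions.
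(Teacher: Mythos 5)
Your main decomposition (write $u=u_L+u_{NL}$, $v=v_L+v_{NL}$ via Duhamel, treat the linear--linear piece by Lemma~\ref{lem:bilinearstrichartz-origin} directly, and treat the Duhamel pieces by Minkowski plus a pointwise-in-$s$ application of the free estimate) is exactly the standard transference argument the paper is referring to, and it correctly produces the bound with $\norm{(i\pdt+\De)u}_{L_t^1L_x^2(I)}$ on the right-hand side. Your observation that Christ--Kiselev is unavailable in the range $q\in[1,2]$ and that Minkowski sidesteps the retardation is also correct for this part.

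The gap is in the final paragraph, where you claim that for a general dual admissible pair $(q_0',r_0')$ one can ``use the inhomogeneous Strichartz estimate of Lemma~\ref{lem:strichartz} to re-express each Duhamel contribution as a free Schr\"odinger evolution of an $L^2_x$ function of norm $\lsm\norm{u}_{S^*(I)}$, and then apply Lemma~\ref{lem:bilinearstrichartz-origin} once more.'' This step is false: the Duhamel piece $u_{NL}(t)=-i\int_0^t e^{i(t-s)\De}F(s)\,ds$ is not of the form $e^{it\De}\phi$ for any $\phi\in L^2_x$ (for instance $u_{NL}(0)=0$ while generically $u_{NL}\not\equiv 0$). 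The inhomogeneous Strichartz estimate controls Strichartz norms of $u_{NL}$, but it does not give a representation of $u_{NL}$ as a single free evolution, and Lemma~\ref{lem:bilinearstrichartz-origin} only applies to genuine free evolutions. So as written your argument only proves the lemma when the $S^*$-norm is taken with the specific pair $(q',r')=(1,2)$.

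This is not a cosmetic issue, because the paper actually invokes the lemma with other dual pairs: in \eqref{esti:energybound-mainterm-transverse-simple-3} the $S^*$-type norms are taken in $L_t^2L_x^{6/5}$, not in $L_t^1L_x^2$, and on the long interval $[0,T]$ one cannot pass from $L_t^2L_x^{6/5}$ back to $L_t^1L_x^2$ without incurring $T$-dependent losses. To cover general dual admissible exponents you need a genuinely different device for the retarded Duhamel terms --- for example a $TT^*$/duality argument in the spirit of \cite{Vis07Duke,Iteam08Annals}, or Christ--Kiselev in the regimes where the time exponents allow it --- rather than the nonexistent ``re-expression'' you propose. In short: the first three quarters of the proposal match the intended proof, but the last step contains a false claim and the general-pair case remains unproved.
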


\begin{lem}[Inhomogeneous Strichartz,\cite{Fos05JHDE,Vil07TranAMS}]\label{lem:inhomogeneousstrichartz}
	Let $I\subset \R$. Suppose that $(q,r)$ satisfy
	\EQ{
	\text{$\frac{5}{4}< q<\I$, $2\loe r \loe 15$, }
	\frac{1}{q} + \frac{3}{r}<\frac{3}{2}\text{, and }\frac{2}{q}+\frac{3}{r}= 2.
	}
	Then, for $a\in I$, we have
	\EQn{\label{eq:inhomogeneousstrichartz}
	\normb{\int_a^t e^{i(t-s)\De} F(s,x)\ds}_{L_{t,x}^5(I)} \lsm \norm{F}_{L_t^{q'} L_x^{r'}(I)}.
	}
	Particularly, $(q,r)=(3/2,9/2)$ satisfies the above conditions.
\end{lem}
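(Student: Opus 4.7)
The plan is to establish this as an application of Foschi's inhomogeneous Strichartz theorem (cf.~\cite{Fos05JHDE,Vil07TranAMS}), whose core ingredients are the dispersive decay of the Schr\"odinger propagator in space, Hardy-Littlewood-Sobolev (HLS) fractional integration in time, and an interpolation argument to extend the admissible range.

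For the base case I would work at the symmetric endpoint $(q,r)=(10/7,5)$. The three-dimensional dispersive bound
\EQ{
\norm{e^{it\De}\phi}_{L_x^5(\R^3)}\lsm \abs{t}^{-9/10}\norm{\phi}_{L_x^{5/4}(\R^3)},
}
obtained by Riesz-Thorin interpolation between $\norm{e^{it\De}\phi}_{L_x^\I}\lsm \abs{t}^{-3/2}\norm{\phi}_{L_x^1}$ and the $L^2$-isometry, combined with Minkowski in the spatial variable, yields
\EQ{
\normb{\int_a^t e^{i(t-s)\De}F(s)\,ds}_{L_x^5}\loe \int_a^t \abs{t-s}^{-9/10}\norm{F(s)}_{L_x^{5/4}}\,ds.
}
The one-dimensional HLS inequality in the $t$-variable (whose exponents $\frac{3}{10}-\frac{1}{5}=\frac{1}{10}=1-\frac{9}{10}$ match) then delivers $\normbb{\int_a^t e^{i(t-s)\De}F\,ds}_{L_{t,x}^5(I)}\lsm \norm{F}_{L_t^{10/3}L_x^{5/4}(I)}$, which is precisely the case $(q,r)=(10/7,5)$.

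To reach the remaining pairs in the stated range, I would interpolate this endpoint with a second endpoint produced by combining the standard homogeneous Strichartz estimate for admissible pairs $(\bar q,\bar r)$ satisfying $\frac{2}{\bar q}+\frac{3}{\bar r}=\frac{3}{2}$ with the Sobolev embedding $\dot H^{1/2}(\R^3)\hookrightarrow L^3(\R^3)$, or equivalently by Stein's complex interpolation between analytic families of fractional propagators as in Foschi's original argument. The scaling identity $\frac{2}{q}+\frac{3}{r}=2$ is exactly the compatibility between the dispersive decay rate associated to the output space $L_x^5$ and HLS in time; the strict inequality $\frac{1}{q}+\frac{3}{r}<\frac{3}{2}$ encodes interiority, keeping us away from the degenerate HLS endpoints where only weak-type estimates would hold; and the ranges $q>5/4$ and $r\loe 15$ isolate the subregion where the relevant dispersive exponent lies strictly inside $(0,1)$, so that no Lorentz-space refinement is necessary.

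The principal obstacle is that $L_{t,x}^5$ is \emph{not} a standard admissible Strichartz norm in three dimensions (one checks $\frac{2}{5}+\frac{3}{5}=1\ne\frac{3}{2}$, reflecting the $\dot H^{1/2}$- rather than $L^2$-critical scaling), so the bound cannot be obtained from the homogeneous Strichartz estimate by $TT^*$ duality alone; the full force of the Foschi/Vilela inhomogeneous theory is required to handle the case $q\ne\tilde q$. For the particular pair $(q,r)=(3/2,9/2)$ singled out in the statement one verifies $\frac{2}{3/2}+\frac{3}{9/2}=\frac{4}{3}+\frac{2}{3}=2$ and $\frac{1}{3/2}+\frac{3}{9/2}=\frac{2}{3}+\frac{2}{3}=\frac{4}{3}<\frac{3}{2}$, so it sits well inside the interpolated range.
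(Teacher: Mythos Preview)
The paper does not supply its own proof of this lemma; it is quoted directly from Foschi \cite{Fos05JHDE} and Vilela \cite{Vil07TranAMS} and used as a black box. So there is no in-paper argument to compare against.

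Your sketch is essentially the right outline of how those references proceed. The diagonal case $(q,r)=(10/7,5)$ via the $L_x^{5/4}\to L_x^5$ dispersive bound and one-dimensional HLS in time is correct and cleanly checked. The interpolation paragraph, however, is where the sketch gets loose. The suggestion of pairing homogeneous Strichartz with the embedding $\dot H^{1/2}\hookrightarrow L^3$ would leave a stray $|\nabla|^{1/2}$ on $F$ that you would then have to trade back into a spatial exponent, and this does not by itself produce a second clean endpoint on the line $\frac{2}{q}+\frac{3}{r}=2$; it is more honest (and what Foschi actually does) to invoke the complex-interpolation machinery on the analytic family $|\nabla|^{z}e^{it\De}$ directly, as you mention in passing. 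Also, your explanation of the constraints $q>5/4$, $r\loe 15$ as isolating a ``dispersive exponent in $(0,1)$'' is not quite accurate: once the scaling $\frac{2}{q}+\frac{3}{r}=2$ and the interior condition $\frac{1}{q}+\frac{3}{r}<\frac{3}{2}$ are imposed, the effective range is $5/4<q<2$ (equivalently $3<r<15/2$), and the stated upper bound $r\loe 15$ is redundant rather than reflecting a separate dispersive restriction. None of this affects the validity of citing the result; it just means the interpolation step would need to be written out with care if you wanted a self-contained proof.
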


\begin{lem}[Interaction Morawetz inequality, \cite{Iteam04CPAM}]\label{lem:interaction}
	Let $u\in C\brk{[0,T]:H^{1/2}}$ be the solution of \eqref{eq:nls-cubic} with $\mu=1$ for some $T>0$. Then, we have
	\EQn{
	\int_{0}^T\int_{\R^3} |u(t,x)|^4\dx\dt \lsm \norm{u_0}_{L_x^2}^2\sup_{t\in[0,T]}\norm{u(t)}_{\dot{H}_x^{1/2}}^2.
	}
\end{lem}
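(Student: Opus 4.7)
My plan is to prove the estimate via the interaction Morawetz method originating in \cite{Iteam04CPAM}, implemented via the $6$D tensor product. Set $\rho = |u|^2$ and $p_j = \mathrm{Im}(\bar u\,\partial_j u)$. From \eqref{eq:nls-cubic} with $\mu=1$ one has the conservation identities $\partial_t\rho = -2\nabla\cdot p$ and $\partial_t p_j = \partial_k L_{jk} - \tfrac{1}{2}\partial_j|u|^4$, where $L_{jk} = 2\mathrm{Re}(\partial_j\bar u\,\partial_k u) - \tfrac{1}{2}\partial_j\partial_k\rho$ is the linear quantum pressure tensor. I would introduce the interaction Morawetz action
\EQ{
M(t) := 2\iint_{\R^3\times\R^3}\rho(t,y)\,p(t,x)\cdot\frac{x-y}{|x-y|}\,dx\,dy,
}
which can equivalently be written $M(t) = \tfrac{1}{2}\tfrac{d}{dt}\iint|x-y|\rho(t,x)\rho(t,y)\,dx\,dy$, and prove the estimate in three steps.

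\textbf{Step 1 (lower bound on $M'$).} Viewing $U(t;x,y) := u(t,x)u(t,y)$ as a wavefunction on $\R^6$ satisfying an NLS with nonlinearity $(|u(x)|^2+|u(y)|^2)U$, I would apply the standard Morawetz differential identity with the convex weight $a(x,y) = |x-y|$. The decisive fact is that in $\R^3$, $-(\Delta_x+\Delta_y)^2 a(x,y) = 32\pi\,\delta(x-y)$, which after pairing with $|U|^2 = |u(x)|^2|u(y)|^2$ contributes $32\pi\int|u|^4\,dx$. Combining this with the positivity of the quadratic form $\iint K_{jk}(x-y)p_j(x)p_k(y)\,dx\,dy$ with kernel $K_{jk}(z) = |z|^{-1}(\delta_{jk}-\hat z_j\hat z_k)$ (coming from the $\partial_t\rho$ differentiation), and the non-negative contribution of $L_{jk}$ (from the $\partial_t p_j$ differentiation together with $\nabla_x\cdot(z/|z|) = 2/|z|$), yields
\EQ{
M'(t) \gsm \int_{\R^3}|u(t,x)|^4\,dx.
}

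\textbf{Step 2 (upper bound on $M$).} Using $(x-y)/|x-y| = \nabla_x|x-y|$ and $(\nabla_x + \nabla_y)|x-y|=0$, we symmetrize
\EQ{
M(t) = \iint\bigl[\rho(y)p(x)-\rho(x)p(y)\bigr]\cdot\frac{x-y}{|x-y|}\,dx\,dy,
}
which on the $6$D side reads $\int_{\R^6}\mathcal{P}\cdot\nabla_X a\,dX$ with $\mathcal{P}(x,y) = (|u(y)|^2 p(x),-|u(x)|^2 p(y))$ and $|\nabla_X a|\equiv\sqrt{2}$. Applying Cauchy-Schwarz in a symmetric fashion, with the distribution of a half-derivative between the $p$-factor and the $\rho$-factor realized through the Riesz transform and the Fourier-side identity $\widehat{|x|} = c\,|\xi|^{-4}$ in $\R^3$, together with the bilinear representations $\widehat{\rho}(\xi) = \int\hat u(\eta+\xi/2)\overline{\hat u(\eta-\xi/2)}\,d\eta$ and $\widehat p_j(\xi) = \int\eta_j\,\hat u(\eta+\xi/2)\overline{\hat u(\eta-\xi/2)}\,d\eta$, one obtains
\EQ{
|M(t)| \lsm \norm{u(t)}_{L_x^2}^2\,\norm{u(t)}_{\dot H_x^{1/2}}^2.
}

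\textbf{Step 3 (integration and conservation).} Integrating the lower bound from Step~1 over $[0,T]$, invoking the fundamental theorem of calculus together with the upper bound of Step~2 and the mass conservation $\norm{u(t)}_{L_x^2} = \norm{u_0}_{L_x^2}$, I conclude
\EQ{
\int_0^T\int_{\R^3}|u|^4\,dx\,dt \lsm \sup_{t\in[0,T]}|M(t)| \lsm \norm{u_0}_{L_x^2}^2\,\sup_{t\in[0,T]}\norm{u(t)}_{\dot H_x^{1/2}}^2,
}
which is the claimed estimate. The main obstacle is Step~2: a naive pointwise bound $|p|\leq |u||\nabla u|$ followed by Cauchy-Schwarz only yields the cruder $\norm{u}_{L^2}^3\,\norm{\nabla u}_{L^2}$, so the gain of half a derivative must be extracted from the Fourier-side smoothing of the kernel $(x-y)/|x-y|$, distributed symmetrically between the two bilinear quantities $\rho$ and $p$.
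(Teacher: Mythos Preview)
The paper does not prove this lemma at all: it is stated in the preliminaries with a citation to \cite{Iteam04CPAM} and then used as a black box in Section~4. So there is no ``paper's proof'' to compare against; you have supplied an argument where the authors simply invoke the literature.

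Your outline is the standard interaction Morawetz derivation and is essentially correct. Step~1 is fine, though you blend the direct CKSTT computation with the Planchon--Vega/Colliander--Grillakis--Tzirakis tensor-product formulation; in the genuine $6$D approach the non-negativity of the cross term $\iint K_{jk}(x-y)p_j(x)p_k(y)\,dx\,dy$ is absorbed into the single statement that $\mathrm{Hess}_{\R^6}|x-y|\geq 0$, so there is no need to verify it separately. Step~3 is routine.

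The one place to be careful is Step~2. The ``symmetric'' Fourier splitting you describe --- putting $|\xi|^{-3/2}$ on each of $\hat\rho$ and $\hat p$ and then appealing to the bilinear formulas --- runs into an endpoint Hardy--Littlewood--Sobolev issue (morally $|\nabla|^{-3/2}:L^1\to L^2$ fails). The argument that actually works, and the one used in \cite{Iteam04CPAM}, is \emph{asymmetric}: bound
\[
|M(t)|\;\leq\;2\norm{u(t)}_{L_x^2}^2\;\sup_{y\in\R^3}\;\Bigl|\int_{\R^3}\frac{x-y}{|x-y|}\cdot\mathrm{Im}\bigl(\bar u\,\nabla u\bigr)(t,x)\,dx\Bigr|,
\]
and then control the inner integral, uniformly in $y$, by $\norm{u(t)}_{\dot H^{1/2}}^2$ via a Hardy-type/multiplier argument (e.g.\ writing $\nabla=R\,|\nabla|$ and using that $|\nabla|^{-1/2}\bigl(\tfrac{x}{|x|}\!\cdot\! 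R\bigr)|\nabla|^{1/2}$ is $L^2$-bounded). Your diagnosis that the naive bound only yields $\norm{u}_{L^2}^3\norm{\nabla u}_{L^2}$ is exactly right; just route the half-derivative gain through this asymmetric estimate rather than a symmetric Cauchy--Schwarz on the Fourier side.
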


\subsection{Rescaling}
From now on, we assume that $\mu=1$ in \eqref{eq:nls-cubic}, and for convenience we denote the initial data as $\wt u_0$. Fix $\wt u_0\in \dot H_x^{1/2} \cap \dot W_x^{s,1}$ with $\frac{12}{13}<s\loe1$. Then, for any $\de>0$, there exists $t_0=t_0(\de,\wt u_0)<1$ such that
\EQ{
\norm{e^{it\De}\wt u_0}_{L_{t,x}^5([0,3t_0])} \loe \de.
}
Let $\de=\de(\norm{\wt u_0}_{\dot H_x^{1/2}\cap \dot W_x^{s,1}})>0$ be a small constant determined later. By standard local theory, the equation \eqref{eq:nls-cubic} admits a unique solution $\wt u\in C([0,t_0];\dot H^{1/2})$ with initial data $\wt u_0$, satisfying
\EQ{
\norm{\wt u}_{L_t^\I \dot H_x^{\frac12}([0,3t_0])} \loe 2\norm{\wt u_0}_{\dot H_x^{\frac12}}\text{, and }\norm{\wt u}_{L_{t,x}^5([0,3t_0])} \loe 2\de.
}
Then, we make the scaling transform
\EQ{
u_0:= t_0^{\frac12}\wt u_0(t_0^{\frac12}x)\text{, and } u(t,x):=t_0^{\frac12}\wt u(t_0t,t_0^{\frac12}x).
}
Now, we have
\EQn{\label{eq:initialbound-u}
\norm{u_0}_{\dot H_x^{1/2}} := \norm{\wt u_0}_{\dot H_x^{1/2}} \text{, and } \norm{u_0}_{\dot W_x^{s,1}} := t_0^{\frac s2-1}\norm{\wt u_0}_{\dot W_x^{s,1}}.
}
Moreover, the local solution $u$ of \eqref{eq:nls-cubic} is defined on $[0,3]$ with
\EQn{\label{eq:localestimate-u}
\norm{u}_{L_t^\I \dot H_x^{\frac12}([0,3])} \loe 2\norm{\wt u_0}_{\dot H_x^{\frac12}}\text{, and }\norm{u}_{L_{t,x}^5([0,3])} \loe 2\de.
}

\subsection{Decomposition of the solution}
For the above $\de$, we can find sufficiently large dyadic  $N_0=N_0(\de,t_0,\wt u_0)\in2^{\N}$ such that
\EQn{\label{eq:initialbound-v}
	\norm{P_{\goe N_0}u_0}_{\dot{H}_x^{1/2}} + \norm{P_{\goe N_0}u_0}_{\dot{W}_x^{s,1}} \loe t_0^{10}\de.
}
We define that $v_0=P_{\goe N_0}u_0$ and $w_0=u_0-v_0$. Then, we decompose the solution $u$ of \eqref{eq:nls-cubic} as $u(t,x) = v(t,x) + w(t,x)$, where $v$ and $w$ satisfy
\begin{equation}
\label{eq:small}
\left\{ \aligned
&i\pdt v + \De v =  \chi(t)|v|^2 v, \\ 
&v(0,x) =  v_0(x),
\endaligned
\right.
\end{equation}
and
\begin{equation}
\label{eq:energy}
\left\{ \aligned 
&i\pdt w + \De w =  |u|^2 u - \chi(t)|v|^2 v, \\ 
&w(0,x) =  w_0(x),
\endaligned
\right.
\end{equation}
respectively. In the following, we regard $\norm{\wt u_0}_{\dot H_x^{1/2}\cap \dot W_x^{s,1}}$ as a constant and omit its dependence for short. We consider the $L_x^2$-norm of $u_0$:
\EQ{
\norm{u_0}_{L_x^2}\loe &  \norm{P_{\loe 1}u_0}_{L_x^2} + \norm{P_{\goe 1}u_0}_{L_x^2}.
}
Note that
\EQ{
\norm{P_{\goe 1}u_0}_{L_x^2} \lsm \norm{P_{\goe 1}u_0}_{\dot H_x^{1/2}} \lsm 1.
}
By Bernstein's inequality, $s<\frac32$, $\dot W^{s,1}\hookrightarrow\dot B^s_{1,\I}$, and \eqref{eq:initialbound-u},
\EQ{
\norm{P_{\loe 1}u_0}_{L_x^2} 
\lsm  \sum_{N\loe1} \norm{P_Nu_0}_{L_x^2}
\lsm  \sum_{N\loe1} N^{\frac32} \norm{P_Nu_0}_{L_x^1} 
\lsm  \norm{u_0}_{\dot W^{s,1}} 
\lsm  t_0^{\frac s2-1}.
}
Therefore,
\EQn{\label{eq:initialbound-u-l2}
\norm{u_0}_{L_x^2}\lsm t_0^{\frac s2-1}.
}
By the above argument and \eqref{eq:initialbound-v},
\EQn{
\norm{v_0}_{L_x^2}\lsm t_0^{10}\de, \label{eq:initialbound-v-l2}
}
and
\EQn{\label{eq:initialbound-w}
	\norm{w_0}_{\dot{H}_x^1}\lsm N_0^{1/2} \norm{u_0}_{\dot{H}_x^{1/2}}\lsm N_0^{1/2}.
}
Using the equation, for any $0\loe\ga\loe1/2$ and $\dot H_x^{\half 1 -\ga}$-admissible $(q,r)$, we have
\EQn{\label{eq:localbound-u}
	\norm{\abs{\nabla}^\ga u}_{L_t^qL_x^r([0,3])} \lsm \brkb{\sum_{N\in2^\Z}N^{2\ga}\norm{ P_Nu}_{L_t^q L_x^r([0,3])}^2}^{1/2} \lsm \norm{u_0}_{\dot H_x^{1/2}} \lsm 1.
}

\section{Subcritical estimate of \textit{v}}
We define $\vl(t,x):=e^{it\De}v_0$ and
\EQ{
	\vnl(t,x):= -i\int_0^t e^{i(t-s)\De} \chi(s)|v(s,x)|^2 v(s,x) \ds.
}
By \eqref{eq:small}, we have $v=\vl+\vnl$. Using the Strichartz estimate in Lemma \ref{lem:strichartz},
\EQ{
	\norm{v}_{L_t^\I H_x^{1/2}\cap L_t^3L_x^9(\R)} \lsm \norm{v_0}_{H_x^{1/2}} + \norm{v}_{L_t^\I H_x^{1/2}(\R)} \norm{v}_{L_t^3L_x^9(\R)}^2.
}
Therefore, by \eqref{eq:initialbound-v}, \eqref{eq:initialbound-v-l2}, and the standard fixed point argument, we have that $v$ is global defined, and satisfies
\EQn{\label{eq:globalbound-v-subonehalfregularity}
	\norm{v}_{L_t^\I H_x^{1/2}\cap L_t^3L_x^9(\R)} \lsm t_0^{10}\de,
}
and thus by  \eqref{eq:strichartz-2} in Lemma \ref{lem:strichartz}, 
\EQn{\label{eq:globalbound-v-subonehalfregularity-NL}
	\norm{\vnl}_{L_t^\I H_x^{1/2}\cap L_t^3L_x^9(\R)} \lsm t_0^{30}\de^3.
}
Using the equation again, by the Bernstein inequality, the Strichartz estimate \eqref{eq:strichartz-2} in Lemma \ref{lem:strichartz}, and the Littlewood-Paley theory, we have the following $\dot H_x^{\half 1}$ estimate:
for any $0\loe\ga\loe1/2$, and $\dot H_x^{\half 1-\ga}$-admissible exponent pair $(q,r)$, we have
\EQn{\label{eq:globalbound-v-onehalfregularity}
	\norm{\abs{\nabla}^\ga v}_{L_t^qL_x^r(\R)} \lsm \brkb{\sum_{N\in2^\Z}N^{2\ga}\norm{ P_Nv}_{L_t^q L_x^r(\R)}^2}^{1/2} \lsm t_0^{10}\de.
}
Combining \eqref{eq:localbound-u}, \eqref{eq:localestimate-u} and \eqref{eq:globalbound-v-onehalfregularity},  we also have
\EQn{\label{eq:localbound-w-onehalf}
	\norm{\abs{\nabla}^\ga w}_{L_t^qL_x^r([0,3])} \lsm \brkb{\sum_{N\in2^\Z}N^{2\ga}\norm{ P_Nw}_{L_t^q L_x^r([0,3])}^2}^{1/2} \lsm \norm{u_0}_{\dot H_x^{1/2}} \lsm 1,
}
and
\EQn{\label{eq:localbound-w-onehalf2}
	\norm{ w}_{L_{t,x}^5([0,3])} \lsm \de.
}
Moreover, by Lemma \ref{lem:strichartz} and Lemma \ref{lem:bilinearstrichartz}, we have the following $l^1$ nonlinear estimate (see \cite{Dod20NLS} for its proof),
\EQn{\label{eq:l1nonlinearestimate}
	\sum_{N} N^\frac12\norm{P_N \brk{|v|^2 v}}_{L_t^1 L_x^2(\R)} \lsm t_0^{30}\de^3.
}
We denote
\EQn{\label{eq:an}
	A(N):=\norm{P_Nv_0}_{L_x^2} + \norm{P_N\brk{|v|^2v}}_{L_t^{1}L_x^{2}(\R)},
}
and
\EQn{\label{eq:bn}
	B(N):=\norm{P_Nw_0}_{L_x^2} + \norm{P_N\brk{|w|^2w}}_{L_t^{1}L_x^{2}([0,3])}.
}
Similarly, we denote $\wt{A}(N)$ and $\wt{B}(N)$ with projector $P_N$ replaced by $P_{\sim N}$. Then, by \eqref{eq:localestimate-u}, \eqref{eq:globalbound-v-onehalfregularity} and \eqref{eq:localbound-w-onehalf}, we have  $l_N^2$-bound:
\EQn{\label{eq:l2bound-full}
\brkb{\sum_N N\brkb{A(N)^2+\wt{A}(N)^2}}^{\half 1} \lsm t_0^{10}\de, \text{ and } \brkb{\sum_N N\brkb{B(N)^2+\wt{B}(N)^2}}^{\half 1} \lsm 1.
}

The first main result in this section is that on local time interval $[0,3]$, $v$ belongs to some subcritical space away from the origin, which is inspired by Proposition 4.4 in \cite{BDSW}.
\begin{lem}\label{lem:local-v}
	Suppose that $\half 1 < s \loe 1$ and $\norm{v_0}_{\dot{H}_x^{1/2}\cap \dot{W}_x^{s,1}}\loe t_0^{10}\de$. Let $v$ be the solution of \eqref{eq:small}. Then, for any $0<\ep < \frac{1}{10}\brkb{ s-\half 1}$, 
	\EQn{\label{eq:localbound-v-supercritical-onefrequency}
		\norm{v}_{X\brk{[0,3]}}:=\sup_{N\goe 1} \normb{t^{\frac{9}{10}}N^{\half 1 +\ep}v_N}_{L_{t,x}^5([0,3])} \lsm t_0^{10}\de.
	}
\end{lem}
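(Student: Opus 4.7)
Following the strategy inspired by \cite{BDSW}, write $v = \vl + \vnl$ where $\vl(t) := e^{it\De}v_0$ and $\vnl$ is the Duhamel piece, and control the $X([0,3])$-norm of each separately. Using $\jb{t^{9/10}N^{1/2+\ep}}\le 1 + t^{9/10}N^{1/2+\ep}$, split the target into an unweighted piece and a weighted one. The unweighted piece is already controlled by the standard $\dot H^{1/2}$-Strichartz bound from \eqref{eq:globalbound-v-subonehalfregularity} and \eqref{eq:globalbound-v-onehalfregularity}: $\|v_N\|_{L^5_{t,x}(\R)}\lsm \delta$ uniformly in $N$.

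\textbf{Weighted bound for $\vl$.} This is the step that uses $v_0\in \dot W^{s,1}$. Combine the dispersive estimate $\|e^{it\De}f\|_{L^5_x}\lsm t^{-9/10}\|f\|_{L^{5/4}_x}$ with interpolation between $\|P_N v_0\|_{L^1}\lsm N^{-s}\delta$ (from $v_0\in\dot W^{s,1}$) and $\|P_N v_0\|_{L^2}\lsm N^{-1/2}\delta$ (from $v_0\in \dot H^{1/2}$) to obtain
\[
\|P_N \vl(t)\|_{L^5_x}\lsm t^{-9/10}N^{-(3s+1)/5}\delta \quad \text{for } t>0.
\]
The $t^{9/10}$ weight exactly absorbs the dispersive singularity in the $L^5_t$-integral on $[0,3]$, yielding
\[
\|t^{9/10}N^{1/2+\ep} P_N\vl\|_{L^5_{t,x}([0,3])}\lsm N^{3/10+\ep-3s/5}\delta,
\]
which is $\lsm \delta$ uniformly in $N\ge 1$ provided $\ep<3(s-1/2)/5$. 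Since $s>1/2$, a nontrivial range of $\ep$ is available.

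\textbf{Weighted bound for $\vnl$.} This is the delicate half. Since $t\le 3$, the weight reduces to $N^{1/2+\ep}$, so it suffices to show $N^{1/2+\ep}\|P_N\vnl\|_{L^5_{t,x}([0,3])}\lsm \delta$. The naive $\dot H^{1/2}$-Strichartz bound applied to \eqref{eq:l1nonlinearestimate} only gives $\|P_N\vnl\|_{L^5_{t,x}}\lsm N^{-1/2}\delta^3$, short by a factor $N^\ep$. To close the gap I expand the source by $v=\vl+\vnl$:
\[
|v|^2v = O(\vl^3) + O(\vl^2\vnl) + O(\vl\vnl^2) + O(\vnl^3).
\]
For the main $O(\vl^3)$ contribution, perform a Littlewood--Paley decomposition of each factor, apply the improved dispersive bound above to the low-frequency $P_M\vl$ factors, and use the multi-scale bi-linear Strichartz estimate of Lemma \ref{lem:bilinearstrichartz} to pair them with the highest frequency; the combined gain from the $\dot W^{s,1}$-data regularity and the bi-linear frequency factor produces the missing $N^{-\ep}$, and the dyadic sum is closed by Schur's test (Lemma \ref{lem:schurtest}). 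Terms containing at least one $\vnl$-factor carry an extra $\delta^2$-smallness thanks to \eqref{eq:globalbound-v-subonehalfregularity-NL} and are absorbed into the left-hand side via a bootstrap on the $X([0,3])$-norm itself.

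\textbf{Main obstacle.} The crux is extracting the extra $N^{-\ep}$ decay on $P_N\vnl$: the $\ell^1$-Strichartz bound from \eqref{eq:l1nonlinearestimate} is already nearly sharp at the $\dot H^{1/2}$-level, and the improvement must be harvested from the subcritical $\dot W^{s,1}$-data through the cubic nonlinearity via dispersive decay. Balancing the high-high-to-low and low-high dyadic interactions so that Schur's test closes after summation is the key technical point.
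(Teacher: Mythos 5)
The linear piece of your argument (dispersive decay on $e^{it\Delta}v_0$ plus interpolation between $\dot W^{s,1}$ and $\dot H^{1/2}$) matches the paper's. The nonlinear piece takes a genuinely different route, and as written it has a gap.

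\textbf{Where you diverge from the paper.} The paper keeps the weight $t^{9/10}N^{1/2+\ep}$ on the Duhamel piece and splits the integral in \emph{time}: $\int_0^{t/2} + \int_{t/2}^t$. On $[0,t/2]$ one has $|t-s|\sim t$, so the dispersive bound on the kernel $e^{i(t-s)\Delta}$ costs $|t-s|^{-9/10}\sim t^{-9/10}$, which the weight $t^{9/10}$ exactly cancels; this reduces matters to $\|N^{1/2+\ep}P_N(|v|^2v)\|_{L^1_tL^{5/4}_x}$, handled by bilinear Strichartz and Schur's test. On $[t/2,t]$ the kernel is singular, so the paper instead uses the inhomogeneous Strichartz estimate (Lemma \ref{lem:inhomogeneousstrichartz}) together with a dyadic decomposition in time; the weight $t^{9/10}\sim s^{9/10}$ is then transferred onto one high-frequency factor $v_{N_1}$ with $N_1\gtrsim N$, producing the bootstrap quantity $\|v\|_{X([0,3])}$ multiplied by $\de^2$. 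You do neither of these things.

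\textbf{The gap.} You drop the weight (``since $t\le 3$ the weight reduces to $N^{1/2+\ep}$'') and aim to prove the strictly stronger statement $N^{1/2+\ep}\|P_N\vnl\|_{L^5_{t,x}([0,3])}\lsm\de$, and then to gain the missing $N^{-\ep}$ from the improved dispersive bound $\|P_M\vl(s)\|_{L^5_x}\lsm s^{-9/10}M^{-(3s+1)/5}\de$ applied to the low-frequency factors inside the cubic source. But that dispersive bound blows up like $s^{-9/10}$ as $s\to 0$, and the Duhamel integral runs over $s\in[0,2]$. In any spacetime H\"older pairing the low factor must land in some $L^q_s$ with $q>1$, and $s^{-9/10}\notin L^q_s([0,2])$ for $q\ge 10/9$; near $s=0$ there is simply no regularity gain to harvest from $\vl$. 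The paper never runs into this issue precisely because it never drops the weight: the weight is small near $t=0$, and the gain is only extracted where $t\gtrsim 1$ (via the time split). You would need some additional mechanism to handle the $s\sim 0$ portion of the integral, and your proposal does not supply one. Relatedly, your final sentence ``absorbed into the left-hand side via a bootstrap on the $X([0,3])$-norm itself'' is inconsistent with having dropped the weight, since $X$ is defined with the weight; once you discard it you have no quantity to bootstrap against. (You also misstate the naive bound: the $\ell^1$ estimate \eqref{eq:l1nonlinearestimate} together with Strichartz gives $\|P_N\vnl\|_{L^5_{t,x}}\lsm c_N$ with $\sum_N c_N\lsm\de^3$, not $\|P_N\vnl\|_{L^5_{t,x}}\lsm N^{-1/2}\de^3$.)

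\textbf{What could be salvaged.} A direct estimate of the unweighted quantity $N^{1/2+\ep}\|P_N\vnl\|_{L^5}$ via the inhomogeneous Strichartz estimate $\|P_N\vnl\|_{L^5_{t,x}}\lsm\|P_N(|v|^2v)\|_{L^{3/2}_tL^{9/7}_x}$, combined with the bilinear estimate of Lemma \ref{lem:bilinearstrichartz} (no dispersive decay on $\vl$ needed), does appear to give $\|P_N(|v|^2v)\|_{L^{3/2}L^{9/7}}\lsm N^{-2/3}\de^3$ for every interaction regime, which closes for $\ep\le 1/6$. So the stronger weight-free statement may well be provable, but by a mechanism other than the one you describe, and in a narrower range of $\ep$ than the lemma asserts. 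If you want to prove the lemma as stated for the full range $\ep<s-1/2$, or in a way that is robust enough to generalize, you should adopt the paper's time split, invoke Lemma \ref{lem:inhomogeneousstrichartz}, and keep the $t^{9/10}$ weight so it can be placed on the high-frequency factor in the self-interaction estimate.
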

\begin{proof}
	We bound the linear part using the dispersive estimate:
	\EQ{
		\normb{t^{\frac{9}{10}}N^{\half 1 +\ep}P_N\vl}_{L_{t,x}^5([0,3])} \lsm \normb{\aabs{\nabla}^{\half1 + \ep} v_0}_{L_x^{\frac{5}{4}}}.
	}
Note that $v_0=P_{\goe 1}v_0$, then by interpolation, $\frac12+\frac{10}{3}\ep<s$, and $\dot W^{s,1}\hookrightarrow\dot B^s_{1,\I}$,
\EQ{
\normb{\aabs{\nabla}^{\half1 + \ep}v_0}_{L_x^{\frac{5}{4}}} \lsm & \sum_{N\goe 1} N^{\frac12+\ep}\norm{P_Nv_0}_{L_x^{\frac54}} \\
\lsm & 	\sum_{N\goe 1} N^{-\ep} \brkb{N^{\frac12+\frac{10}{3}\ep}\norm{P_Nv_0}_{L_x^{1}}}^{\frac35} \brkb{N^{\frac12}\norm{P_Nv_0}_{L_x^{2}}}^{\frac25} \\
\lsm & \norm{v_0}_{\dot W_x^{s,1}}^{\frac35} \norm{v_0}_{\dot H_x^{1/2}}^{\frac25} \\
\lsm & t_0^{10} \de.
}
Then, we have
	\EQn{\label{esti:localsuper-linear}
		\normb{t^{\frac{9}{10}}N^{\half 1 +\ep}P_N \vl}_{L_{t,x}^5([0,3])} \lsm t_0^{10} \de.
	}
	
	For the non-linear part, $\normo{t^{9/10}N^{1/2 +\ep}P_N \vnl}_{L_{t,x}^5([0,3])}$ can be bounded by
	\EQnn{
		& \normb{t^{\frac{9}{10}}N^{\half 1 +\ep} \int_0^{t/2} e^{i(t-s)\De}\chi(s)P_N\brk{|v|^2v}\ds}_{L_{t,x}^5\brk{[0,3]}} \label{eq:localsuper-nonlinear-smalltime}\\
		& +\normb{t^{\frac{9}{10}}N^{\half 1 +\ep} \int_{t/2}^t e^{i(t-s)\De}\chi(s)P_N\brk{|v|^2v}\ds}_{L_{t,x}^5\brk{[0,3]}} \label{eq:localsuper-nonlinear-largetime}.
	}
	We bound the term \eqref{eq:localsuper-nonlinear-smalltime} by the dispersive estimate, noting that $t\sim|t-s|$ when $s\loe t/2$,
	\EQn{\label{esti:localsuper-nonlinear-smalltime}
		\eqref{eq:localsuper-nonlinear-smalltime} \lsm & \normb{N^{\half1+\ep}\int_0^{2} \norm{P_N\brk{|v|^2v}}_{L_x^{5/4}}\ds}_{L_t^5([0,3])} \\
		\lsm &  \normb{N^{\half1+\ep}P_N\brk{|v|^2v}}_{L_t^1L_x^{5/4}([0,2])} \\
		\lsm & \normb{N^{\half1+\ep}P_N \brk{v_{\gsm N}^2(v + v_{\ll N}) + v_{\sim N}v_{\ll N}^2}}_{L_t^1L_x^{5/4}([0,2])},
	}
	where we have  used the frequency support property
	\EQ{
		P_N\brk{|v|^2v} = P_NO\brk{v_{\gsm N}^3 +v_{\gsm N}^2v_{\ll N} + v_{\gsm N}v_{\ll N}^2} = P_NO\brk{v_{\gsm N}^2(v + v_{\ll N}) + v_{\sim N}v_{\ll N}^2}.
	}
	Then by $N\goe 1$, H\"older's inequality and \eqref{eq:globalbound-v-onehalfregularity}, 
	\EQn{\label{esti:localsuper-nonlinear-smalltime-highfrequency}
		&\normb{N^{\half1+\ep}P_N \brk{v_{\gsm N}^2(v + v_{\ll N})}}_{L_t^1L_x^{5/4}([0,2])} \\ 
		\lsm &  N^{\half 1+\ep} \norm{v_{\gtrsim N}}_{L_t^\I L_x^2([0,2])}  \norm{v_{\gtrsim N}}_{L_t^2L_x^{20/3}([0,2])} \norm{v}_{L_t^2L_x^{20/3}([0,2])} \\
		\lsm & N^{\ep-\frac{9}{20}} \norm{v_{\gtrsim N}}_{L_t^\I \dot{H}_x^{1/2}(\R)} \norm{v_{\gtrsim N}}_{L_t^2\dot{W}_x^{9/20,20/3}(\R)} \norm{v}_{L_t^{40/11}L_x^{20/3}([0,2])} \\
		\lsm & \norm{v}_{L_t^\I \dot{H}_x^{1/2}(\R)}\norm{v}_{L_t^2\dot{W}_x^{9/20,20/3}(\R)}\norm{v}_{L_t^{40/11}L_x^{20/3}(\R)}\lsm t_0^{30}\de^3.
	}
	Note that for $N\goe 1$, by \eqref{eq:initialbound-v}, we have
	\EQn{
		N^\ep\wt{A}(N) \lsm \norm{v_0}_{\dot{H}_x^{1/2}} + \normb{\abs{\nabla}^{\half 1}\brk{|v|^2v}}_{L_t^1L_x^2(\R)} \lsm t_0^{10}\de,
	}
	then combining Lemma \ref{lem:bilinearstrichartz}, Lemma \ref{lem:schurtest},  \eqref{eq:globalbound-v-onehalfregularity} and \eqref{eq:l2bound-full}, we have
	\EQn{\label{esti:localsuper-nonlinear-smalltime-lowfrequency}
		& \normb{N^{\half1+\ep}P_NO\brk{ v_{\sim N}v_{\ll N}^2}}_{L_t^1L_x^{5/4}([0,2])} \\
		\lsm & N^{\half 1 +\ep} \sum_{N_1,N_2:N_1\loe N_2 \ll N} \norm{O\brk{v_{\sim N} v_{N_1}}}_{L_{t,x}^2([0,2])} \norm{v_{N_2}}_{L_t^2L_x^{10/3}([0,2])} \\
		\lsm & \sum_{N_1,N_2:N_1\loe N_2 \ll N} N^{\ep} N_1 \wt{A}(N) A(N_1) \norm{v_{N_2}}_{L_{t,x}^{10/3}([0,2])} \\
		\lsm & \de \sum_{N_1,N_2:N_1\loe N_2 \ll N} \frac{N_1^{1/2}}{N_2^{1/2}} N_1^{1/2} A(N_1) \norm{|\nabla|^{1/2}v_{N_2}}_{L_{t,x}^{10/3}(\R)} 
		\lsm t_0^{30} \de^3.
	}

	Next, we estimate \eqref{eq:localsuper-nonlinear-largetime}. Note that we have $P_N\brk{|v|^2v}=P_NO\brk{v_{\gsm N}v^2}$. Using dyadic decomposition in time, Lemma \ref{lem:inhomogeneousstrichartz} and $l^3\subset l^5$, we have
	\EQn{\label{esti:localsuper-nonlinear-largetime}
		\eqref{eq:localsuper-nonlinear-largetime} \lsm & \brkb{ \sum_{M\loe 1, M\in 2^\Z} \normb{M^{\frac{9}{10}}N^{\half 1 +\ep} \int_{t/2}^t e^{i(t-s)\De}\chi(s)P_NO\brk{v_{\gsm N}v^2}\ds}_{L_{t,x}^5\brk{[3M/2,3M]}}^5}^{\rev 5} \\
		\lsm & \brkb{ \sum_{M\loe 1, M\in 2^\Z} \normb{M^{\frac{9}{10}}N^{\half 1 +\ep}  \ph_{3M/4\loe\cdot\loe 3M}(t)P_NO\brk{v_{\gsm N}v^2}}_{L_t^{3} L_x^{\frac{9}{7}}\brk{[0,3]}}^5}^{\rev5} \\
		\lsm &  \normb{t^{\frac{9}{10}}N^{\half 1 +\ep}  P_NO\brk{v_{\gsm N}v^2}}_{L_t^{3} L_x^{\frac{9}{7}}\brk{[0,3]}} \\
		\lsm & N^{\half 1 +\ep} \sum_{N_1:N_1\gsm N} \normb{t^{\frac{9}{10}}v_{N_1}}_{L_{t,x}^5([0,3])} \norm{v}_{L_t^{15}L_x^{\frac{45}{13}}(\R)}^2 \\
		\lsm & t_0^{20}\de^2 \sum_{N_1:N_1\gsm N} \frac{N^{\half 1 +\ep}}{N_1^{\half 1 +\ep}} \normb{t^{\frac{9}{10}}N_1^{\half 1 +\ep} v_{N_1}}_{L_{t,x}^5([0,3])} \lsm t_0^{20}\de^2 \norm{v}_{X([0,3])}.
	}
	
	Combining the estimates \eqref{esti:localsuper-linear}, \eqref{esti:localsuper-nonlinear-smalltime-highfrequency},  \eqref{esti:localsuper-nonlinear-smalltime-lowfrequency}, and \eqref{esti:localsuper-nonlinear-largetime}, we have
	\EQ{
		\norm{v}_{X([0,3])} \lsm t_0^{10}\de + t_0^{20} \de^2 \norm{v}_{X([0,3])}.
	}
	Noting that $t_0<1$ and $\de$ is sufficiently small, this implies \eqref{eq:localbound-v-supercritical-onefrequency}.
\end{proof}
An immediate consequence of Lemma \ref{lem:local-v} is 
\begin{cor}\label{cor:localbound-v-supercritical}
	Suppose that $\half 1<s\loe 1$ and $\norm{v_0}_{\dot{H}_x^{1/2}\cap \dot{W}_x^{s,1}}\loe t_0^{10} \de$, then 
	\EQn{\label{eq:localbound-v-supercritical}
		\sum_{N\in2^\Z} \norm{\abs{\nabla}^{1/2}v_N}_{L_{t,x}^5([1,3])} \lsm t_0^{10}\de.
	}
\end{cor}
Next, we derive the global bound for $v$ when $t\goe 3$, for which the estimate is even better. This lemma is one of the key ingredients in our argument, which gives subcritical estimates for long time and will be frequently used in the following.
\begin{lem}\label{lem:globalbound-v}
	Suppose that $\half 1<s\loe 1$ and $\norm{v_0}_{\dot{H}_x^{1/2}\cap \dot{W}_x^{s,1}}\loe t_0^{10} \de$, and let $v$ be the solution of \eqref{eq:small}. Then, for any $t\goe 3$, we have 
	\EQn{
		\norm{v(t)}_{\dot B^s_{\I,\I}} \lsm & t^{-\frac{3}{2}} t_0^{10}\de, \label{eq:globalbound-v-nablainfty}
	}
	and
	\EQn{\label{eq:globalbound-v-infty}
		\norm{v(t)}_{L_x^\I} \lsm & t^{-\frac{3}{2}} t_0^{10} \de, \text{ and}  \quad \norm{v(t)}_{L_x^4} \lsm  t^{-\frac{3}{8}} t_0^{10}\de.
	}
\end{lem}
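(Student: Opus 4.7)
The approach is to use the Duhamel decomposition $v=v_l+v_{nl}$ with $v_l(t)=e^{it\De}v_0$ and $v_{nl}(t)=-i\int_0^t e^{i(t-s)\De}(\chi(s)|v|^2v)\,ds$, and to exploit the fact that $\supp\chi\subset[0,2]$. For $t\goe 3$, the Duhamel integral only runs over $s\in[0,2]$, on which range $|t-s|\sim t$, so the free propagator $e^{i(t-s)\De}$ delivers the full dispersive decay $|t-s|^{-3/2}\sim t^{-3/2}$.

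For the bound \eqref{eq:globalbound-v-nablainfty}, I apply the dispersive inequality $\|e^{i\tau\De}f\|_{L_x^\infty}\lsm|\tau|^{-3/2}\|f\|_{L_x^1}$. The linear piece is immediate from the data assumption $v_0\in\dot W^{s,1}$:
\EQn{
\||\nabla|^s e^{it\De}v_0\|_{L_x^\infty}=\|e^{it\De}|\nabla|^sv_0\|_{L_x^\infty}\lsm t^{-3/2}\||\nabla|^sv_0\|_{L_x^1}\lsm t^{-3/2}\de.
}
For the Duhamel integral, the problem reduces to proving
\EQn{
\int_0^2\||\nabla|^s(|v|^2v)\|_{L_x^1}\,ds\lsm\de^3,
}
which I would attack by fractional Leibniz $\||\nabla|^s(|v|^2v)\|_{L_x^1}\lsm\||\nabla|^sv\|_{L_x^{p_1}}\|v\|_{L_x^{p_2}}^2$ combined with a split of the time integral into $[0,1]\cup[1,2]$. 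On $[1,2]$, the weight $t^{9/10}$ in Lemma \ref{lem:local-v} is bounded below, and summation in $N$ against the $N^{-\ep}$ tail from \eqref{eq:localbound-v-supercritical-onefrequency} produces a usable bound of the type $\||\nabla|^{1/2+\ep}v\|_{L_{t,x}^5([1,2])}\lsm\de$ that feeds the Leibniz splitting. On $[0,1]$, the critical Strichartz norms in \eqref{eq:globalbound-v-onehalfregularity}, supplemented with the $\dot W^{s,1}$ regularity of the linear piece $v_l$ (transferred via the dispersive estimate with an integrable time singularity), close the estimate.

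The $L^\infty$ bound in \eqref{eq:globalbound-v-infty} follows by the same Duhamel argument without derivatives: since $v_0=P_{\goe N_0}u_0$ is frequency-localized to $|\xi|\gsm N_0\goe 1$, Bernstein applied dyadically with $\||\nabla|^sv_0\|_{L_x^1}\lsm\de$ yields $\|v_0\|_{L_x^1}\lsm\de$, hence $\|v_l(t)\|_{L_x^\infty}\lsm t^{-3/2}\de$; and $\int_0^2\||v|^2v\|_{L_x^1}\,ds=\|v\|_{L_t^3L_x^3([0,2])}^3\lsm\de^3$ via the Sobolev embedding $\dot H^{1/2}\hookrightarrow L^3$ together with the uniform bound $\|v\|_{L_t^\infty \dot H_x^{1/2}}\lsm\de$. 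The $L^4$ bound then follows from the interpolation $\|v\|_{L_x^4}\lsm\|v\|_{L_x^2}^{1/2}\|v\|_{L_x^\infty}^{1/2}$ together with the uniform mass bound $\|v\|_{L_t^\infty L_x^2}\lsm\de$ from \eqref{eq:globalbound-v-subonehalfregularity}. The main obstacle is the super-critical integral estimate above: $v$ only enjoys the critical $\dot H^{1/2}$ control uniformly in time, so the $|\nabla|^s$-regularity with $s>1/2$ must be imported from Lemma \ref{lem:local-v}, whose weight $t^{9/10}$ degenerates at $t=0$, forcing careful patching between super-critical bounds on $[1,2]$ and standard critical Strichartz estimates on $[0,1]$.
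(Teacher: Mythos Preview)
Your treatment of the linear part of \eqref{eq:globalbound-v-nablainfty} and of both estimates in \eqref{eq:globalbound-v-infty} is fine; in fact your $L_x^\I$ bound on $\vl$ via $\norm{v_0}_{L_x^1}\lsm\de$ (which is justified, since $v_0=P_{\goe N_0}u_0$ with $N_0\goe 1$ and $s>0$) yields the stronger decay $t^{-3/2}$, and your $L_x^2$--$L_x^\I$ interpolation for $L_x^4$ works.

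The gap is in the nonlinear part of \eqref{eq:globalbound-v-nablainfty}. After the dispersive reduction you are left with
\EQ{
\int_0^2\norm{|\nabla|^s(|v|^2v)}_{L_x^1}\ds,
}
and you propose to close this by fractional Leibniz plus H\"older, feeding in Lemma~\ref{lem:local-v} on $[1,2]$ and critical Strichartz on $[0,1]$. This fails on the high--low--low paraproduct: writing $|\nabla|^s(|v|^2v)=\sum_N O(|\nabla|^s v_N\, v_{\ll N}\, v_{\ll N})+\cdots$, any direct H\"older splitting produces a factor $N^{s}\norm{v_N}$ that, with only the $\dot H^{1/2}$-level control $\norm{v_N}\lsm N^{-1/2}\de$ (or the at-best $N^{-1/2-\ep}$ from Lemma~\ref{lem:local-v} with $\ep<s-\half1$), leaves a divergent sum $\sum_{N\goe 1}N^{s-1/2-\ep}$. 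The $\dot W^{s,1}$ input on $\vl$ does not rescue this: the obstruction is the high-frequency piece of $\vnl$, which has only critical regularity. The paper recovers exactly the missing $N^{-1/2}$ by invoking the bilinear Strichartz estimate (Lemma~\ref{lem:bilinearstrichartz}) on the product $O(\nabla v_N\, v_{N_1})$ with $N_1\ll N$, which is the key mechanism you omit. Concretely, the paper bounds $\norm{\nabla(|v|^2v)}_{L_{t,x}^1([0,2])}$ directly (handling $s\loe 1$ a fortiori), splitting into a high--high piece treated by H\"older and Schur's test, and a high--low--low piece treated by the bilinear estimate combined with Schur's test; see \eqref{esti:dispersive-vnonlinear-relativelyhigh}--\eqref{esti:dispersive-vnonlinear-transverse}. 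Without this bilinear gain your scheme does not close on either subinterval.
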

\begin{proof}
	First, we prove \eqref{eq:globalbound-v-nablainfty}. 
	For any $t>0$, by the dispersive estimate
	\EQn{\label{esti:dispersive-vlinear-nablainfty}
		\norm{\vl}_{\dot B^s_{\I,\I}}  \lsm t^{-3/2}\norm{v_0}_{\dot B^s_{1,\I}} \lsm t^{-3/2}\norm{v_0}_{\dot W_x^{s,1}} \lsm t^{-3/2}t_0^{10}\de.
	}
	For the non-linear part, we have
	\EQn{\label{esti:dispersive-vnonlinear}
		\norm{\vnl}_{\dot B^1_{\I,\I}} \lsm & \int_0^2 |t-s|^{-3/2}\sup_{N\in2^\Z}\norm{NP_N\brk{|v|^2 v}}_{L_x^1} \ds \\
		\lsm & \int_0^2 |t-s|^{-3/2}\sup_{N\in2^\Z}\norm{P_N\nabla\brk{|v|^2 v}}_{L_x^1} \ds \\
		\lsm & t^{-3/2} \norm{O\brk{\nabla v v^2}}_{L_{t,x}^1([0,2])} \\
		\lsm & t^{-3/2}\normb{\sum_N O\brk{\nabla v_N v_{\gsm N} v}+O\brk{\nabla v_N v_{\ll N} v_{\ll N}}}_{L_{t,x}^1([0,2])}.
	}
	By H\"older's inequality, Lemma \ref{lem:schurtest}, and \eqref{eq:globalbound-v-onehalfregularity},
	\EQn{\label{esti:dispersive-vnonlinear-relativelyhigh}
		\normb{\sum_NO\brk{\nabla v_N v_{\gsm N} v}}_{L_{t,x}^1([0,2])} \lsm  \sum_{N_1\gsm N} \norm{O\brk{\nabla v_N v_{N_1} v}}_{L_{t,x}^1([0,2])} &\\
		\lsm  \sum_{N_1\gsm N} \frac{N^{1/2}}{N_1^{1/2}} N^{1/2}\norm{v_N}_{L_{t,x}^2([0,2])} N_1^{1/2}\norm{v_{N_1}}_{L_{t,x}^{\frac{10}{3}}(\R)} \norm{v}_{L_{t,x}^5(\R)} 
		\lsm t_0^{30} \de^3.&
	}
Changing order of summation and using H\"older's inequality,
	\EQn{
	&\normb{\sum_NO\brk{\nabla v_N v_{\ll N} v_{\ll N}}}_{L_{t,x}^1([0,2])} \\
	\lsm & \normb{\sum_NO\brkb{\nabla v_N \sum_{N_1,N_2:N_1\loe N_2\ll N}v_{N_1} v_{N_2}}}_{L_{t,x}^1([0,2])} \\
	\sim & \normb{\sum_{N_2}\sum_{N: N_2\ll N}O\brkb{\nabla v_N \sum_{N_1:N_1\loe N_2}v_{N_1} v_{N_2}}}_{L_{t,x}^1([0,2])} \\
	\lsm & \sum_{N_2} \normb{\sum_{N: N_2\ll N}O\brkb{\nabla v_N \sum_{N_1:N_1\loe N_2}v_{N_1}}}_{L_{t,x}^2([0,2])} \norm{v_{N_2}}_{L_{t,x}^2([0,2])}. \\
	} 
	By the frequency support property, we can update the $l_N^1$-summation for $N$ to $l_N^2$:
	\EQn{\label{esti:l1tol2}
	\normb{\sum_{N: N_2\ll N}O\brkb{\nabla v_N \sum_{N_1:N_1\loe N_2}v_{N_1}}}_{L_{t,x}^2([0,2])} 
	\lsm &  \normb{O\brkb{\nabla v_N \sum_{N_1:N_1\loe N_2}v_{N_1}}}_{l^2_{N:N\gg N_2}L_{t,x}^2([0,2])} \\
	\lsm & \sum_{N_1:N_1\loe N_2} \normb{O\brk{\nabla v_N v_{N_1}}}_{l^2_{N:N\gg N_2}L_{t,x}^2([0,2])}.
	}
	Note also that by \eqref{eq:globalbound-v-subonehalfregularity},
	\EQn{\label{esti:localbound-v-L2L2}
		\brkb{\sum_{N_2} N_2 \norm{v_{N_2}}_{L_{t,x}^2([0,2])}^2}^{1/2} \lsm \norm{v}_{L_{t}^2\dot{H}_x^{1/2}([0,2])} \lsm t_0^{10}\de.
	}
Then using Lemma \ref{lem:bilinearstrichartz}, Lemma \ref{lem:schurtest}, \eqref{eq:l2bound-full}, and \eqref{esti:localbound-v-L2L2},
\EQn{\label{esti:dispersive-vnonlinear-transverse}
&\normb{\sum_NO\brk{\nabla v_N v_{\ll N} v_{\ll N}}}_{L_{t,x}^1([0,2])} \\
\lsm & \sum_{N_1,N_2:N_1\loe N_2} \normb{O\brk{\nabla v_N v_{N_1}}}_{l^2_{N:N\gg N_2}L_{t,x}^2([0,2])} \norm{v_{N_2}}_{L_{t,x}^2([0,2])} \\
\lsm &\sum_{N_1,N_2:N_1\loe N_2}  \brkb{\sum_{N:N\gg N_2}NN_1^2A(N)^2A(N_1)^2}^{1/2}   \norm{v_{N_2}}_{L_{t,x}^2([0,2])} \\
\lsm & t_0^{20} \de \sum_{N_1,N_2:N_1\loe N_2} \frac{N_1^{1/2}}{N_2^{1/2}} N_1^{1/2}A(N_1)N_2^{1/2} \norm{v_{N_2}}_{L_{t,x}^2([0,2])} 
\lsm t_0^{30} \de^3.
}
Therefore, by \eqref{esti:dispersive-vnonlinear},  \eqref{esti:dispersive-vnonlinear-relativelyhigh}, and \eqref{esti:dispersive-vnonlinear-transverse}, we obtain $\norm{\vnl}_{\dot B^1_{\I,\I}}\lsm t^{-\half 3}t_0^{30}\de^3$. By dispersive estimate and $\dot H_x^{\half 1}\subset L_x^3$, $\norm{\vnl}_{\dot B^0_{\I,\I}}\lsm \norm{\vnl}_{L_x^\I} \lsm  t^{-\frac{3}{2}}\int_0^2 \norm{v(s)}_{L_x^3}^3 \ds \lsm t^{-\half 3}t_0^{30}\de^3$. Then, by interpolation, $\norm{\vnl}_{\dot B^{s}_{\I,\I}} \lsm t^{-\half 3}t_0^{30}\de^3$, which completes the proof of \eqref{eq:globalbound-v-nablainfty}.
	
By \eqref{esti:dispersive-vlinear-nablainfty}, we have
\EQn{
\norm{v(t)}_{L_x^\I} \lsm & \sum_{N\goe 1} \norm{P_{N}\vl}_{L_x^\I} + \norm{\vnl}_{L_x^\I} \\
\lsm & t^{-\frac{3}{2}}\sum_{N\goe 1} \norm{P_Nv_0}_{L_x^1} + t^{-\frac{3}{2}}t_0^{30}\de^3 \\
\lsm & t^{-\frac{3}{2}} \norm{v_0}_{\dot W_x^{s,1}} + t^{-\frac{3}{2}}t_0^{30}\de^3 \\
\lsm & t^{-\frac{3}{2}}t_0^{10}\de.
}
Moreover, $L^4$-estimate follows from \eqref{eq:globalbound-v-infty} and the interpolation
	\EQn{
		\norm{v(t)}_{L_x^4} \lsm \norm{v(t)}_{L_x^\I}^{\frac{1}{4}} \norm{v(t)}_{L_x^3}^{\frac{3}{4}}.
	}
This finishes the proof of this lemma.
\end{proof}

\section{Energy estimate}
First, we have the following local estimate of $w$. 	Note that $w$ satisfies
\EQn{
	w(t) = e^{it\De} w_0 - i\int_0^t e^{i(t-s)\De}\brk{|u|^2u-\chi(s)|v|^2v}\ds.
}

\begin{prop}\label{prop:local-w}
	Suppose that $\half 1<s\loe 1$, $\wt u_0\in \dot{H}_x^{1/2}\cap \dot{W}_x^{s,1}$, $u_0(x)=t_0^{1/2}\wt u_0(t_0^{1/2}x)$, and let $v$ and $w$ be the solutions of equations \eqref{eq:small} and \eqref{eq:energy}, respectively. Then,
	\EQn{\label{eq:localbound-w}
		\sup_{0\loe t\loe 3} \norm{w(t)}_{\dot{H}^1} \lsm N_0^{1/2}\norm{u_0}_{\dot{H}_x^{1/2}}.
	}
\end{prop}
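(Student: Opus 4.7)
Applying Duhamel to \eqref{eq:energy},
\[
w(t) = e^{it\De} w_0 - i\int_0^t e^{i(t-s)\De} F(s)\,\ds,\qquad F := |u|^2 u - \chi(t)|v|^2 v.
\]
Since every term in $|u|^2 u - |v|^2 v$ (with $u = v+w$) contains at least one factor of $w$, decompose $F = O(wu^2) + (1-\chi(t))\,O(|v|^2 v)$. The Strichartz estimate (Lemma \ref{lem:strichartz}) applied to $\nabla w$ yields
\[
\|\nabla w\|_{L_t^\infty L_x^2([0,T])} \lesssim \|w_0\|_{\dot H^1} + \|\nabla F\|_{L_t^1 L_x^2([0,T])},
\]
and $\|w_0\|_{\dot H^1} \lesssim N_0^{1/2}\|u_0\|_{\dot H^{1/2}}$ by \eqref{eq:initialbound-w}. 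The plan is a bootstrap on $Y(T):=\|\nabla w\|_{S_2^0([0,T])}$, which dominates $\sup_{[0,T]}\|w(t)\|_{\dot H^1}$; the target is $Y(T)\lesssim N_0^{1/2}\|u_0\|_{\dot H^{1/2}} + C\de^2 Y(T) + C\de^3 \|u_0\|_{\dot H^{1/2}\cap\dot W^{s,1}}$.

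\textbf{The $O(wu^2)$ piece.} By Leibniz, $\nabla(wu^2) = (\nabla w)u^2 + 2wu\nabla u$. The first summand is controlled by $\|\nabla w\|_{L_t^\infty L_x^2}\|u\|_{L_t^2 L_x^\infty}^2 \lesssim \de^2 Y$ via \eqref{eq:localestimate-u}. Splitting $\nabla u = \nabla w + \nabla v$, the piece $wu\nabla w$ is bounded by $\|w\|_{L_t^2 L_x^\infty}\|u\|_{L_t^2 L_x^\infty}\|\nabla w\|_{L_t^\infty L_x^2}\lesssim\de^2 Y$ via \eqref{eq:localbound-w-onehalf}. The delicate case is $wu\nabla v$: since only $\dot H^{1/2}$-control of $v$ is available, a direct Hölder estimate loses half a derivative. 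We perform a dyadic decomposition $w_{N_1} u_{N_2}\nabla v_{N_3}$ and, in the main configuration $N_3\sim N\gg N_1, N_2$, apply the bilinear Strichartz estimate (Lemma \ref{lem:bilinearstrichartz}) to $(w_{N_1},\nabla v_{\sim N})$:
\[
\|O(w_{N_1}\nabla v_{\sim N})\|_{L_t^2 L_x^2}\lesssim \frac{N_1}{N^{1/2}}\|w_{N_1}\|_{S^*}\|\nabla v_{\sim N}\|_{S^*}\lesssim N_1 N^{1/2}\wt B(N_1)\wt A(N),
\]
using $\|\nabla v_{\sim N}\|_{S^*}\lesssim N\wt A(N)$. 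Multiplying by the remaining factor $\|u_{N_2}\|_{L_t^2 L_x^\infty}$ (square-summable with weight $N_2^{1/2}$ by \eqref{eq:localbound-u}) and applying Schur's test (Lemma \ref{lem:schurtest}) together with the $l^2_N$-bound \eqref{eq:l2bound-full} closes this regime. Other frequency configurations (high frequency on $w$ or $u$; high-high-to-low cancellations) do not encounter the half-derivative loss and are handled analogously.

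\textbf{Residual $v^3$ term and closure.} On $[1,3]$, $1-\chi(t)$ may be nonzero, and $\nabla((1-\chi)\,O(v^3))$ must be estimated. Using the supercritical bound \eqref{eq:localbound-v-supercritical} from the corollary to Lemma \ref{lem:local-v}, i.e.~$\sum_N\||\nabla|^{1/2}v_N\|_{L_{t,x}^5([1,3])}\lesssim\de$, paired with the Strichartz norm $\|v\|_{L_{t,x}^5}\lesssim\de$ extracted from \eqref{eq:globalbound-v-onehalfregularity}, a trilinear Hölder in a suitable dual Strichartz norm (cf.~Lemma \ref{lem:inhomogeneousstrichartz}) yields a contribution $O(\de^3)$. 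Assembling the estimates,
\[
Y(T)\lesssim N_0^{1/2}\|u_0\|_{\dot H^{1/2}} + C\de^2 Y(T) + C\de^3\|u_0\|_{\dot H^{1/2}\cap\dot W^{s,1}},
\]
and choosing $\de$ small absorbs the middle term, yielding $Y(T)\lesssim N_0^{1/2}\|u_0\|_{\dot H^{1/2}}$, which implies \eqref{eq:localbound-w}. \textbf{Main obstacle:} the bilinear Strichartz analysis of $wu\nabla v$. All frequency triples $(N_1, N_2, N_3)$ must be dissected into their relevant regimes, and the Schur-test summation requires Bernstein's inequality at very low frequencies to preclude low-frequency divergence of the dyadic sums; avoiding such divergences while staying compatible with the $\dot H^{1/2}$-only control on $v$ is the chief technical difficulty.
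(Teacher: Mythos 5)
Your setup (Duhamel, decomposition $F=O(wu^2)+(1-\chi)O(v^3)$, bootstrap in a Strichartz-type norm, bilinear Strichartz for the $v_{\mathrm{high}}\cdot\mathrm{low}\cdot\mathrm{low}$ configuration) matches the paper's architecture, but the key summation does \emph{not} close as you have written it, and this is precisely where the paper spends its technical effort.

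Concretely, in the transverse configuration you pair $(w_{N_1},\nabla v_{\sim N})$ in $L^2_{t,x}$, which yields $N\|v_{\sim N}w_{N_1}u_{N_2}\|_{L^1_tL^2_x}\lesssim N^{1/2}N_1\,\wt A(N)\,\|w_{N_1}\|_{S^*}\,\|u_{N_2}\|_{L^2_tL^\infty_x}$. After taking $\ell^2_N$ (which only buys a $\de$ via \eqref{eq:l2bound-full}), the residual sum is
$\sum_{N_1,N_2}\brk{N_1/N_2}^{1/2}\cdot\brk{N_1^{1/2}\|w_{N_1}\|_{S^*}}\cdot\brk{N_2^{1/2}\|u_{N_2}\|_{L^2_tL^\infty_x}}$. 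The kernel $\brk{N_1/N_2}^{1/2}$ is Schur-summable only over $N_1\le N_2$; over $N_1>N_2$ it diverges, and no Bernstein manipulation at low frequency repairs this, because the obstruction is the relative size of $N_1$ versus $N_2$, not an absolute low-frequency effect. The paper handles exactly this dichotomy: it keeps the bilinear pair fixed as $(v_{\sim N},\,v_{N_2}+w_{N_2})$ and instead varies the interpolation exponent, taking $q_0=1/(1-\ep)$ when $N_1\le N_2$ and $q_0=2$ when $N_1\ge N_2$ (see \eqref{7.04-0723}--\eqref{esti:localbound-w-gradientv-tranverse}), which flips the sign in the Schur kernel $(N_2/N_1)^{3/2-2/q_0}$ between the two regimes. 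This in turn forces the bootstrap norm $\wt X$ to carry the extra component $\brk{\sum_N N^{4\ep-1}\|P_Nw\|_{L_t^{1/\ep}L_x^\I}^2}^{1/2}$ so that the endpoint factor $N_1^{2\ep-1/2}\|w_{N_1}\|_{L_t^{1/\ep}L_x^\I}$ is $\ell^2_{N_1}$-controlled; your single-norm bootstrap $Y=\|\nabla w\|_{S_2^0}$ does not make this step visible. (An alternative fix compatible with your $q_0=2$ pairing would be to swap which low-frequency factor enters the bilinear estimate according to whether $N_1\le N_2$ or $N_1\ge N_2$; you do neither.)

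A second, smaller gap: the $(1-\chi)|v|^2v$ term cannot be closed by ``trilinear H\"older'' alone, because the supercritical bound \eqref{eq:localbound-v-supercritical} only gives half a derivative on $v$ in $L^5_{t,x}$, so the transverse piece $\sum_N O\brk{\nabla v_N v_{\ll N}v_{\ll N}}$ again requires the bilinear Strichartz estimate (this is exactly \eqref{esti:localbound-w-threev-transverse} in the paper, mirroring \eqref{esti:dispersive-vnonlinear-transverse}); only the high-high piece $O(\nabla v_N v_{\gtrsim N}v)$ is a pure H\"older/Schur argument. You should isolate this term and treat it the same way as the $wu\nabla v$ transverse case rather than bundling it into an unspecified dual Strichartz estimate.
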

\begin{proof}
We define $\wt X([0,3])$ norm as
\EQn{
	\norm{w}_{\wt X([0,3])}:=&\norm{w}_{L_t^\I \dot{H}_x^1([0,3])}+\brkb{ \sum_{N\in2^\Z} N^2 \norm{P_Nw}_{L_{t,x}^{\frac{10}{3}}([0,3])}^2 + N \norm{P_Nw}_{L_{t,x}^{5}([0,3])}^2}^{1/2} \\
	&+ \brkb{ \sum_{N\in2^\Z} N \norm{P_Nw}_{L_t^{2}L_x^\I([0,3])}^2 + N^{4\ep-1} \norm{P_Nw}_{L_t^{1/\ep}L_x^\I([0,3])}^2}^{1/2}.
}
Here $\varepsilon\ll 1$ is a fixed positive small number. 
By \eqref{eq:strichartz-1}  and \eqref{eq:strichartz-3} in Lemma \ref{lem:strichartz} and \eqref{eq:initialbound-w}, we have
\EQn{\label{esti:localbound-w-linear}
\norm{e^{it\De}w_0}_{\wt X([0,3])} \lsm N_0^{1/2}\norm{u_0}_{\dot{H}_x^{1/2}}.
}

For the non-linear part, by the Sobolev inequality and \eqref{eq:strichartz-2} in Lemma \ref{lem:strichartz}, we have
\EQnn{
&\normb{\int_0^t e^{i(t-s)\De}\brk{|u|^2u-\chi(s)|v|^2v}\ds}_{\wt X([0,3])} \nonumber\\ \lsm& \brkb{\sum_N N^2 \norm{P_N\brk{|u|^2u-\chi(s)|v|^2v}}_{L_t^1L_x^2+L_{t,x}^{\frac{10}{7}}([0,3])}^2}^{1/2} \nonumber\\
\lsm & \brkb{\sum_N N^2 \norm{\brk{1-\chi(t)}P_N \brk{|v|^2v}}_{L_{t,x}^{\frac{10}{7}}([0,3])}^2}^{1/2}\label{eq:local-w-awayorigin}\\
&+ \brkb{\sum_N N^2 \norm{P_N\brk{|u|^2u-|v|^2v}}_{L_t^1L_x^2+L_{t,x}^{\frac{10}{7}}([0,3])}^2}^{1/2}.\label{eq:local-w-difference}
}

For the first term \eqref{eq:local-w-awayorigin}, by Minkowski's inequality and the Littlewood-Paley theory, it suffices to estimate
\EQn{
	\norm{\nabla\brk{|v|^2 v}}_{L_{t,x}^{10/7}([1,3])}.
}
To this end, we use the similar method in the proof of  \eqref{eq:globalbound-v-nablainfty} in Lemma \ref{lem:globalbound-v} and  write 
\EQ{
\nabla\brk{|v|^2v} =\sum_N O\brk{\nabla v_N v_{\gsm N} v}+O\brk{\nabla v_N v_{\ll N} v_{\ll N}}.
}
	By H\"older's inequality, Lemma \ref{lem:schurtest}, \eqref{eq:globalbound-v-onehalfregularity}, and \eqref{eq:localbound-v-supercritical},
\EQn{\label{esti:localbound-w-threev-relativelyhigh}
	\normb{\sum_NO\brk{\nabla v_N v_{\gsm N} v}}_{L_{t,x}^{10/7}([1,3])} \lsm  \sum_{N_1\gsm N} \norm{O\brk{\nabla v_N v_{N_1} v}}_{L_{t,x}^{10/7}([1,3])} &\\
	\lsm  \sum_{N_1\gsm N} \frac{N^{1/2}}{N_1^{1/2}} N^{1/2}\norm{v_N}_{L_{t,x}^5([1,3])} N_1^{1/2}\norm{v_{N_1}}_{L_{t,x}^{\frac{10}{3}}(\R)} \norm{v}_{L_{t,x}^5(\R)} 
	\lsm t_0^{30} \de^3.&
}
Using the similar argument in \eqref{esti:l1tol2}, and then using Lemma \ref{lem:bilinearstrichartz}, Lemma \ref{lem:schurtest}, \eqref{eq:l2bound-full}, and \eqref{eq:localbound-v-supercritical}, we have
\EQn{\label{esti:localbound-w-threev-transverse}
	\normb{\sum_NO\brk{\nabla v_N v_{\ll N} v_{\ll N}}}_{L_{t,x}^{10/7}([1,3])}
	\lsm & \sum_{N_1,N_2:N_1\loe N_2} \normb{O\brk{\nabla v_N v_{N_1}}}_{l^2_{N:N\gg N_2}L_{t,x}^2([0,3])} \\
	&\cdot\norm{v_{N_2}}_{L_{t,x}^5([1,3])} \lsm t_0^{30} \de^3.
}
Combining  \eqref{esti:localbound-w-threev-relativelyhigh} and \eqref{esti:localbound-w-threev-transverse}, we have
\EQn{\label{esti:localbound-w-threev-nonlinear}
	\norm{\nabla \big(|v|^2v\big)}_{L_{t,x}^{10/7}([1,3])} \lsm t_0^{30} \de^3.
}

For the term \eqref{eq:local-w-difference}, we have
\EQnn{
	&\brkb{\sum_N N^2 \norm{P_N\brk{|u|^2u-|v|^2v}}_{L_t^1L_x^2+L_{t,x}^{\frac{10}{7}}([0,3])}^2}^{1/2}\nonumber\\
	\lsm & \brkb{\sum_N  N^2 \norm{P_NO\brk{ w_{\gsm N}\brk{w^2+wv+v^2}}}_{L_{t,x}^{\frac{10}{7}}([0,3])}^2}^{1/2} \label{eq:local-w-difference-nablaw}\\
	& + \brkb{\sum_N  N^2 \norm{P_NO\brk{v_{\gsm N}\brk{w^2+wv}}}_{L_t^1L_x^2+L_{t,x}^{\frac{10}{7}}([0,3])}^2}^{1/2}.\label{eq:local-w-difference-nablav}
}
We first estimate the term \eqref{eq:local-w-difference-nablaw}. We only consider the term $O\brk{w_{\gsm N}w^2}$, since other terms can be treated similarly. Since 
\EQ{
P_N O\brk{w_{\gsm N}w^2} = P_N O\brk{w_{\gsm N}w_{\gsm N}w} + P_N O\brk{w_{\sim N}w_{\ll N}^2},
}
by H\"older's inequality, \eqref{eq:localbound-w-onehalf}, \eqref{eq:localbound-w-onehalf2}, and Lemma \ref{lem:schurtest}, we have
\EQn{\label{esti:localbound-w-gradientw-twohigh}
&\brkb{\sum_N  N^2 \norm{P_N O\brk{w_{\gsm N}w_{\gsm N}w}}_{L_{t,x}^{\frac{10}{7}}([0,3])}^2}^{1/2}\\
\lsm& \brkb{\sum_N  \brkb{\sum_{N_1,N_2:N_1,N_2\gsm N}N \norm{w_{N_1}}_{L_{t,x}^5([0,3])} \norm{w_{N_2}}_{L_{t,x}^{\frac{10}{3}}([0,3])} \norm{w}_{L_{t,x}^5([0,3])}}^2}^{1/2} \\
\lsm& \de\brkb{\sum_N  \brkb{\sum_{N_1,N_2:N_1,N_2\gsm N}\frac{N}{N_1^{1/2}N_2^{1/2}} N_1^{1/2} \norm{w_{N_1}}_{L_{t,x}^5([0,3])} N_2^{1/2} \norm{w_{N_2}}_{L_{t,x}^{\frac{10}{3}}([0,3])} }^2}^{1/2}\\
\lsm& \de \sum_{N_1,N_2}\frac{\min\fbrk{N_1,N_2}^{1/2}}{\max\fbrk{N_1,N_2}^{1/2}} N_1^{1/2} \norm{w_{N_1}}_{L_{t,x}^5([0,3])} N_2^{1/2} \norm{w_{N_2}}_{L_{t,x}^{\frac{10}{3}}([0,3])} 
\lsm \de \norm{w}_{\wt X([0,3])}.
}
Similarly using only H\"older's inequality and \eqref{eq:localbound-w-onehalf2}, we also have
\EQn{\label{esti:localbound-w-gradientw-onehigh}
\brkb{\sum_N  N^2 \norm{P_N O\brk{w_{\sim N}w_{\ll N}^2}}_{L_{t,x}^{\frac{10}{7}}([0,3])}^2}^{1/2} \lsm & \brkb{\sum_N  N^2 \norm{w_{\sim N}}_{L_{t,x}^{\frac{10}{3}}([0,3])}^2 \norm{w}_{L_{t,x}^5([0,3])}^4 }^{1/2} \\
\lsm & \de^2 \norm{w}_{\wt X([0,3])}.
}
Therefore, by \eqref{esti:localbound-w-gradientw-twohigh} and \eqref{esti:localbound-w-gradientw-onehigh}, we get 
\EQn{\label{esti:localbound-w-gradientw}
	\eqref{eq:local-w-difference-nablaw}\lsm \de  \norm{w}_{\wt X([0,3])}.
}

For the second term  \eqref{eq:local-w-difference-nablav}, we write 
\EQ{
P_NO\brk{ v_{\gsm N}\brk{w^2+wv}}= &P_NO\brk{ v_{\sim N}\brk{w_{\ll N}^2+w_{\ll N}v_{\ll N}}}\\
&+
P_NO\brk{ v_{\gsm N}w_{\gtrsim N}w+ v_{\gsm N}w_{\gtrsim N}v+v_{\gsm N}v_{\gtrsim N} w}.
}
Treated similarly as \eqref{esti:localbound-w-gradientw-twohigh}, we have 
\EQn{\label{esti:localbound-w-gradientv-relativelyhigh}
	\brkb{\sum_N  N^2\big\|P_NO\brk{ v_{\gsm N}w_{\gtrsim N}w+ v_{\gsm N}w_{\gtrsim N}v+v_{\gsm N}v_{\gtrsim N}w}&\big\|_{L_{t,x}^{\frac{10}{7}}([0,3])}^2}^{1/2} \lsm  
	\de \norm{w}_{\wt X([0,3])}.
}
Therefore, it suffices to estimate
\EQn{\label{eq:local-w-mainterm}
	& \brkb{\sum_N  N^2 \norm{O\brk{ v_{\sim N}\brk{w_{\ll N}^2+w_{\ll N}v_{\ll N}}}}_{L_t^1 L_x^2([0,3])}^2}^{1/2}.
}
By Minkowski's inequality,
\EQn{\label{7.03-0723}
	\eqref{eq:local-w-mainterm} \lsm  \sum_{N_1,N_2} \brkb{\sum_{N:N\gg N_1,N_2}  N^2 \norm{O\brk{v_{\sim N}\brk{v_{N_2}+w_{N_2}}w_{N_1}}}_{L_t^1 L_x^2([0,3])}^2}^{1/2}.
}
For $N_1,N_2\ll N$, $1< q_0 \loe 2$, by Lemma \ref{lem:bilinearstrichartz}, we have the following tri-linear estimate:
\EQn{\label{7.04-0723}
&N \norm{O\brk{v_{\sim N}\brk{v_{N_2}+w_{N_2}}w_{N_1}}}_{L_t^1L_x^2([0,3])}\\ \lsm & N \norm{O\brk{v_{\sim N}\brk{v_{N_2}+w_{N_2}}}}_{L_t^{q_0}L_x^2([0,3])} \norm{w_{N_1}}_{L_t^{q_0'}L_x^\I([0,3])} \\
\lsm & N^{\half 1} N_2^{2-\frac{2}{q_0}}\wt{A}(N) (A(N_2)+B(N_2)) N_1^{\frac{2}{q_0}-\half 3} N_1^{\frac{2}{q_0'}-\half 1} \norm{w_{N_1}}_{L_t^{q_0'}L_x^\I([0,3])} \\
\lsm & N^{\half 1} \wt{A}(N) \brkb{\frac{N_2}{N_1}}^{\half 3-\frac{2}{q_0}} N_2^{\half 1}(A(N_2)+B(N_2))N_1^{\frac{2}{q_0'}-\half 1} \norm{w_{N_1}}_{L_t^{q_0'}L_x^\I([0,3])}.
}
Therefore, by \eqref{7.03-0723}, \eqref{7.04-0723} and \eqref{eq:l2bound-full},
we have
\EQn{
\eqref{eq:local-w-mainterm} \lsm t_0^{10}\de \sum_{N_1,N_2} \brkb{\frac{N_2}{N_1}}^{\half 3-\frac{2}{q_0}} N_2^{\half 1}(A(N_2)+B(N_2))N_1^{\frac{2}{q_0'}-\half 1} \norm{w_{N_1}}_{L_t^{q_0'}L_x^\I([0,3])}.
}
Note that for both choices $q_0=2$ and $q_0=1/(1-\ep)$ for $\varepsilon\ll 1$, we have
\EQn{\label{esti:local-w-inductionhypothesis}
\normb{N_1^{\frac{2}{q_0'}-\half 1} \norm{w_{N_1}}_{L_t^{q_0'}L_x^\I([0,3])}}_{l_{N_1}^2} \lsm \norm{w}_{\wt X([0,3])}.
}
Therefore, we can choose $q_0=1/(1-\ep)$ when $N_1\loe N_2$, and $q_0=2$ when $N_1\goe N_2$. Using Lemma \ref{lem:schurtest},  \eqref{eq:l2bound-full} and \eqref{esti:local-w-inductionhypothesis}, we have
\EQn{\label{esti:localbound-w-gradientv-tranverse}
\eqref{eq:local-w-mainterm} \lsm & t_0^{10}\de \sum_{N_1\loe N_2}
\brkb{\frac{N_1}{N_2}}^{\half 1-2\ep} N_2^{\half 1}(A(N_2)+B(N_2))N_1^{2\ep-\half 1} \norm{w_{N_1}}_{L_t^{1/\ep}L_x^\I([0,3])} \\
&+ t_0^{10}\de\sum_{N_1\goe N_2}
\brkb{\frac{N_2}{N_1}}^{\half 1} N_2^{\half 1}(A(N_2)+B(N_2))N_1^{\half 1} \norm{w_{N_1}}_{L_t^2L_x^\I([0,3])} \\
\lsm &t_0^{10}\de (t_0^{10}\de+1)  \norm{w}_{\wt X([0,3])}.
}

Together with \eqref{esti:localbound-w-linear},  \eqref{esti:localbound-w-threev-nonlinear},  \eqref{esti:localbound-w-gradientw},  \eqref{esti:localbound-w-gradientv-relativelyhigh}, and \eqref{esti:localbound-w-gradientv-tranverse}, we have
\EQn{
	\norm{w}_{\wt X([0,3])}
	\lsm & N_0^{1/2}\norm{u_0}_{\dot{H}_x^{1/2}} + t_0^{30}\de^3 + t_0^{10}\de  \norm{w}_{\wt X([0,3])},
}
then by choosing $\delta$ small enough, we have
\EQ{
	\norm{w}_{\wt X([0,3])}\lsm N_0^{1/2}\norm{u_0}_{\dot{H}_x^{1/2}},
}
which completes the proof of \eqref{eq:localbound-w}.
\end{proof}

Define that
\EQ{
	E(w(t)) = \half 1\int_{\R^3} \abs{\nabla w(t,x)}^2 \dx + \rev 4\int_{\R^3}  \abs{w(t,x)}^4 \dx.
}
By \eqref{eq:localbound-w}, \eqref{eq:initialbound-u}, \eqref{eq:initialbound-v}, Sobolev's inequality, and the interpolation,
\EQ{
	\sup_{0\loe t\loe 3}\norm{w(t)}_{L_x^4} \lsm \sup_{0\loe t\loe 3}\norm{w(t)}_{L_x^3}^{\frac12} \norm{w(t)}_{L_x^6}^{\frac12} \lsm N_0^{1/4}.
}
Then, there exists a constant $C_0=C_0(\norm{\wt u_0}_{\dot H_x^{1/2}\cap\dot W_x^{s,1}})>0$ such that 
\EQn{\label{eq:localbound-w-c0}
	\sup_{0\loe t\loe 3} E(w(t)) \loe C_0 N_0.
}

Next, we turn to the energy bound of $w$ when $t\goe 3$. For all $t\goe 0$, by conservation law $\norm{u(t)}_{L_x^2}^2 = \norm{u_0}_{L_x^2}^2$,
and  \eqref{eq:globalbound-v-subonehalfregularity}, we have
\EQn{\label{eq:globalbound-w-l2}
\norm{w(t)}_{L_x^2} \lsm t_0^{\frac s2-1}.
} 
We take $3<T_0<\I$ such that $u$ is well-posed on $[0,T_0)$. 
We are going to prove that
\begin{prop}\label{prop:globalbound-w-energy}
	Suppose that $12/13 < s\loe 1$, $\wt u_0\in \dot{H}_x^{1/2}\cap \dot{W}_x^{s,1}$, and $u_0(x)=t_0^{1/2}\wt u_0(t_0^{1/2}x)$. Let $v$ and $w$ be the solution of equations \eqref{eq:small} and \eqref{eq:energy}, respectively. Then, 
	\EQn{\label{eq:globalbound-w-energy}
		\sup_{3\loe t<T_0} E(w(t)) <2C_0 N_0.
	}
\end{prop}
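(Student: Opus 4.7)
The strategy is a continuity/bootstrap argument on the quantity
\[
E^*(T) := \sup_{3\le t\le T} E(w(t)),\qquad T\in[3,T_0).
\]
Since $w\in C([0,T_0);\dot H^1)$ and $E^*(3)\le C_0 N_0\|u_0\|_{\dot H^{1/2}}^4$ by \eqref{eq:localbound-w-c0}, I would assume the bootstrap hypothesis $E^*(T)\le 2C_0 N_0\|u_0\|_{\dot H^{1/2}\cap\dot W^{s,1}}^4$ and aim to improve it to, say, $E^*(T)\le \tfrac{3}{2}C_0 N_0\|u_0\|_{\dot H^{1/2}\cap\dot W^{s,1}}^4$. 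By continuity this closes the argument on all of $[3,T_0)$.

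\textbf{Energy identity.} For $t\ge 3$ we have $\chi(t)=0$, so $w$ solves $i\partial_t w+\Delta w = |w|^2w + F$, where
\[
F := |u|^2u-|w|^2w = O(v^3)+O(v^2 w)+O(v w^2).
\]
Substituting $-\Delta w+|w|^2 w = i\partial_t w - F$ and noting that $\mathrm{Re}\int i|\partial_t w|^2\,dx=0$ gives
\[
\frac{d}{dt} E(w) = -\,\mathrm{Re}\int F\,\overline{\partial_t w}\,dx.
\]
I would then use $\partial_t w=i(\Delta w - |u|^2u)$ and integrate by parts in space on the $\Delta w$ piece, or alternatively integrate by parts in time and write
\[
E(w(T))-E(w(3)) = -\mathrm{Re}\int F\bar w\,dx\,\Big|_{3}^{T} + \int_3^T \mathrm{Re}\int (\partial_t F)\bar w\,dx\,dt,
\]
so that only spatial derivatives of $v$ and $w$ appear in the final bounds. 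I would split $F$ into the three groups above and estimate each one.

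\textbf{Term-by-term estimates.} The easy piece is the purely cubic-in-$v$ source $O(v^3)$: using \eqref{eq:globalbound-v-nablainfty}--\eqref{eq:globalbound-v-infty}, the bounds $\|v(t)\|_{L^\infty}\lesssim t^{-3/4}\delta$ and $\|\nabla^s v(t)\|_{L^\infty}\lesssim t^{-3/2}\delta$ make $\|\nabla(v^3)\|_{L^1_t L^2_x([3,T])}$ time-integrable, yielding a contribution $\lesssim \delta^3\|\nabla w\|_{L^\infty_t L^2_x}$. For $O(v^2 w)$, the same pointwise decay of $v$ plus Cauchy--Schwarz against $\nabla w$ and $|w|^2 w$ produces bounds of the form $\delta^2\int_3^T t^{-3/2}E(w(t))\,dt$, again easily absorbable. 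The delicate term is $O(vw^2)$, where I need to control spatial derivatives of $vw^2$ by $\nabla w$ or $|w|^2w$. The $L^4_{t,x}$-integrability of $w$ is supplied by the interaction Morawetz inequality (Lemma \ref{lem:interaction}) applied to the full solution $u$, together with the triangle inequality and the uniform $\dot H^{1/2}$ bound on $u$ and $v$:
\[
\int_3^T \|w(t)\|_{L^4_x}^4\,dt\lesssim \|u_0\|_{L^2}^2\sup_{t}\|u(t)\|_{\dot H^{1/2}}^2 + \text{(good }v\text{ terms)}.
\]
This is the key time-integrable input that keeps the $L^4$-density bounded.

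\textbf{The main obstacle.} The hardest sub-term is $\int_3^T\int \nabla w\cdot\nabla(vw^2)\,dx\,dt$, which contains $\int \nabla v\cdot w^2\,\nabla w$. Since $s<1$, the quantity $\|\nabla v\|_{L^\infty}$ is not available, so I cannot simply estimate in $L^\infty$; this is exactly where the paper warns that bilinear Strichartz (Lemma \ref{lem:bilinearstrichartz}) must be used to transfer derivative from $v$ to $w$ in the frequency regime $v_{\text{low}}w_{\text{high}}$, at the cost of inserting an $L^2_{t,x}$-type factor. Naively this would blow up in $T$, so the interpolation indicated in \eqref{esti:energybound-mainterm-transverse-simple-1} between the bilinear bound and the interaction Morawetz $L^4_{t,x}$ bound is required to turn the $L^2_{t,x}$ into a norm that grows only slowly (or not at all) in $T$. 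Performing a Littlewood--Paley decomposition of $v$ and $w$, using Lemma \ref{lem:bilinearstrichartz} together with Schur's test (Lemma \ref{lem:schurtest}) to sum frequencies, and exploiting the hypothesis $s>12/13$ to make the resulting dyadic sums summable, I would finally obtain
\[
E(w(T))\le C_0 N_0\|u_0\|_{\dot H^{1/2}\cap\dot W^{s,1}}^4 + C(\|u_0\|_{\dot H^{1/2}\cap \dot W^{s,1}})\,\delta^{\alpha}\,E^*(T)
\]
for some $\alpha>0$. Choosing $\delta$ sufficiently small (possible since $\delta$ was free in \eqref{eq:initialbound-v}) yields the improved bound, closing the bootstrap and proving \eqref{eq:globalbound-w-energy}.
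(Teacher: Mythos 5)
Your proposal has the right architecture — a bootstrap on the energy, the interaction Morawetz $L^4_{t,x}$ bound, and bilinear Strichartz for the transverse frequency interaction — but it is missing the single idea the paper's proof actually hinges on: the modified energy
\[
\mathcal{E}(w(t)) := E(w(t)) + \langle |w|^2 w,\, v\rangle .
\]
You work with the plain identity $\frac{d}{dt}E(w)=-\mathrm{Re}\int F\,\overline{\partial_t w}\,dx$ and propose to process it by integrating by parts in $x$ or in $t$. Neither option closes. The point of the modification is the exact algebraic cancellation: since $\frac{d}{dt}\langle |w|^2w,v\rangle=\langle 2v|w|^2+w^2\bar v,\,w_t\rangle+\langle |w|^2w,\,v_t\rangle$, the piece $-\langle 2v|w|^2+w^2\bar v,\,w_t\rangle$ hiding inside $-\langle F,w_t\rangle$ is removed, leaving \eqref{eq:modifiedenergy}, in which every remaining term pairs $w_t$ (or $\nabla w$, after IBP in $x$) with at least two explicit factors of $v$, or pairs one $\nabla v$ against exactly one $\nabla w$. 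This is the structural reason the resulting integrals can be bounded by the norms you list.

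Without the cancellation, your \emph{option (a)} (substitute $w_t=i\Delta w-i|u|^2u$, then IBP in $x$) produces $\mathrm{Im}\int \nabla F\cdot\nabla\bar w$, and the $O(vw^2)$ part of $F$ contributes $\mathrm{Im}\int 2vw(\nabla\bar w)^2\,dx$. This term has two full gradients of $w$ against a single $v w$: each factor of $\nabla w$ costs $N_0^{1/2}$ from the bootstrap hypothesis $\|w\|_{L^\infty_t\dot H^1}\lesssim N_0^{1/2}$, and the only smallness you have in $v$ is $\|v\|_{L^2_tL^\infty_x}\lesssim\delta$ (the pointwise bound $\|v(t)\|_{L^\infty}\lesssim t^{-3/4}\delta$ is \emph{not} $L^1_t$); there is no way to pay for the spatial integrability of $w\,|\nabla w|^2$ with the available $w\in L^4_{t,x}$, $\nabla w\in L^\infty_t L^2_x$. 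Your \emph{option (b)} (IBP in $t$) is equivalent to replacing $E$ by $E+\langle F,w\rangle$; but since $\langle O(vw^2),w\rangle = 3\langle|w|^2w,v\rangle$ this over-corrects, and a direct computation shows $\frac{d}{dt}(E+\langle F,w\rangle)=\langle \partial_tF,w\rangle$ still contains a $+2\langle O(vw^2),w_t\rangle$ term, which after IBP in $x$ gives back the same $\int vw(\nabla w)^2$ obstruction. Relatedly, when you write that the hardest sub-term of $\int\nabla w\cdot\nabla(vw^2)$ is $\int\nabla v\cdot w^2\nabla w$, you have dropped the other piece $\int v w\,(\nabla w)^2$ of that Leibniz expansion — and that piece is worse, it is precisely what $\mathcal{E}$ is built to eliminate. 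Once the modified energy is in place, the remaining work (time-decay for the $v$-heavy terms, bilinear Strichartz interpolated with Morawetz for the transverse interaction, Schur's test to sum frequencies, giving $s>12/13$) does proceed much as you outline.
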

\begin{proof}
Suppose that \eqref{eq:globalbound-w-energy} did fail, then we could define
	\EQ{
		T:=\inf\fbrkb{3<T_1< T_0 : \sup_{3\loe t<T_1} E(w(t)) \goe 2C_0 N_0 .}
	}
In the following, we denote $\lsm_{C_0}$ as $\lsm$  for short.
	
	We first collect some useful estimates. By definition of $T$, for $0 \loe t\loe T$, we have that
	\EQn{\label{esti:globalbound-w-inductionhypothesis}
	\norm{w(t)}_{\dot H_x^1}\lsm N_0^{1/2} \text{, and }\norm{w(t)}_{L_x^4}\lsm N_0^{1/4} .
	} 
Combining \eqref{eq:globalbound-w-l2}, for any $0\loe l\loe1$,
\EQn{\label{eq:globalbound-u-onehalf}
\norm{w}_{L_t^\I \dot{H}_x^{l}([0,T])}+\norm{u}_{L_t^\I \dot{H}_x^{l}([0,T])} \lsm N_0^{\frac12 l}t_0^{\frac{1-l}{2}(s-2)}.
}
Using the interaction Morawetz inequality in Lemma \ref{lem:interaction}, \eqref{eq:globalbound-u-onehalf} and \eqref{eq:globalbound-v-subonehalfregularity}, we have
\EQn{\label{eq:globalbound-w-l44}
\norm{v}_{L_{t,x}^4([0,T])}\lsm t_0^{10}\de \text{, and } \norm{w}_{L_{t,x}^4([0,T])} \lsm N_0^{1/8} t_0^{\frac 32(s-2)}.
}
Interpolating with $\norm{w}_{L_t^\I L_x^6} \lsm N_0^{1/2}$,
\EQn{\label{esti:globalbound-w-l6l9/2}
\norm{w}_{L_t^6 L_x^{9/2}([0,T])} \lsm  N_0^{1/4}t_0^{s-2}.
}
Using the equation \eqref{eq:energy} and \eqref{esti:globalbound-w-l6l9/2}, we have 
\EQn{\label{eq:globalbound-w-l2infty}
\norm{w}_{L_t^3L_x^9([0,T])} &\lsm N_0^{1/4} + \normb{\abs{\nabla}^{\half 1}(|u|^2u)}_{L_t^2L_x^{\frac65}([0,T])}\\ &\lsm N_0^{1/4} + \normb{\abs{\nabla}^{\half 1}u}_{L_t^\I L_x^3([0,T])}\norm{u}_{L_{t,x}^4([0,T])}^2 \lsm N_0^{3/4}t_0^{3(s-2)}.
}
Then, by \eqref{esti:globalbound-w-inductionhypothesis}, \eqref{eq:globalbound-w-l2},  \eqref{esti:globalbound-w-l6l9/2}, and \eqref{eq:globalbound-w-l2infty}, for any $0\loe \al \loe 1$ and $N\in2^\Z$, we have
\EQn{\label{eq:globalbound-w-full}
&N^\al\norm{P_N w_0}_{L_x^2} + N^\al \norm{P_N\brk{|u|^2u}}_{L_t^2L_x^{\frac65}([0,T])} \\
\lsm & N_0^{\half \al} + \norm{u}_{L_t^\I\dot H_x^\al([0,T])} \norm{u}_{L_t^6L_x^{\frac92}([0,T])} \norm{u}_{L_t^3L_x^9([0,T])} \lsm  N_0^{\half \al+1} t_0^{(\frac92-\frac12\al)(s-2)}. 
}
Moreover, for $0\loe \al\loe\half 1$, we can improve the bound by \eqref{esti:globalbound-w-inductionhypothesis}, \eqref{eq:globalbound-w-l2}, and \eqref{eq:globalbound-w-l44}, 
\EQn{\label{eq:globalbound-w-full2}
&N^\al\norm{P_N w_0}_{L_x^2} + N^\al \norm{P_N\brk{|u|^2u}}_{L_t^2L_x^{\frac65}([0,T])} \\
\lsm & N_0^{\half \al} + \norm{\abs{\nabla}^\al u}_{L_t^\I L_x^3([0,T])} \norm{u}_{L_{t,x}^4([0,T])}^2 \lsm  N_0^{\half \al+\rev 2}t_0^{(\frac{13}{4}-\frac12\al)(s-2)}.
}

Inspired by \cite{Dod20NLS}, we define the modified energy by
\EQn{
	\E(w(t)) = E(w(t)) + \jb{|w|^2w,v}.
}
Then, we have
\EQn{\label{eq:modifiedenergy}
	\frac{\mathrm{d}}{\mathrm{d}t} \E(w(t)) = & \jb{|w|^2w,v_t} - \jb{w_t,|v|^2v} -\jb{w_t,2|v|^2w+v^2\wb{w}}.
}
Therefore, noting that $C_0N_0\gsm 1$, 
	\EQn{\label{esti:energyerror}
		\sup_{3\loe t\loe T}\abs{\E(w(t))-E(w(t))} \loe \rev{100} C_0 N_0.
	}
		Note that for $t\goe 3$, $v_t=i\De v$. After integrating by parts in $x$, we have
	\EQn{
		\int_3^T\aabs{\jb{|w|^2w,v_t}}\dt \lsm  \int_3^T\absb{\int O\brk{\nabla w\cdot\nabla v w^2}\dx}\dt.
	}
	Therefore, for any $3\loe t\loe T$, by \eqref{eq:localbound-w-c0}, \eqref{esti:energyerror}, and \eqref{eq:modifiedenergy}, it follows that
	\EQn{
		\E(w(t)) \loe &  \E(w(3)) +  \int_3^t\absb{ \frac{\mathrm{d}}{\mathrm{d}s} \E(w(s))} \ds \\
		\loe & C_0N_0  + \rev{100} C_0N_0  + \int_3^T\absb{\jb{|w|^2w,v_t}}\dt + C \int_3^T\int(|w|^4+|v|^4)|v|^2\dx\dt \\
		& + C\int_3^T\int\aabs{\nabla w v}^2\dx\dt +C\int_3^T\absb{\int O\brkb{\nabla w\cdot\nabla v v(w +v)}\dx}\dt \\
		\loe &  \frac{101}{100} C_0N_0 + \int_3^T\int(|w|^4+|v|^4)|v|^2\dx\dt  \\
	&+ C\int_3^T\int\aabs{\nabla w v}^2\dx\dt +C\int_3^T\absb{\int O\brkb{\nabla w\cdot\nabla v (w^2+wv+v^2)}\dx}\dt.
}

By \eqref{eq:globalbound-w-l44} and  \eqref{eq:globalbound-v-infty}, we have
\EQn{\label{esti:energybound-potential}
\int_3^T\int(|w|^4+|v|^4)|v|^2\dx\dt \lsm & \brkb{\norm{w}_{L_{t,x}^4([3,T])}^4+\norm{v}_{L_{t,x}^4([3,T])}^4} \norm{v}_{L_t^\I L_x^\I([3,T])}^2 \\
\lsm &  N_0^{1/2}t_0^{6(s-2)}(t_0^{10}\de)^2 \lsm \de^2 N_0^{1/2}.
}
By \eqref{esti:globalbound-w-inductionhypothesis} and  \eqref{eq:globalbound-v-infty}, we also have
\EQn{\label{esti:energybound-nogradientv}
\int_3^T\int\aabs{\nabla w v}^2\dx\dt \lsm \norm{w}_{L_t^\I\dot{H}_x^1([3,T])}^2\norm{v}_{L_t^2L_x^\I([3,T])}^2 \lsm \de^2 N_0.
}
By \eqref{eq:globalbound-w-l44} and \eqref{eq:globalbound-v-infty},
we have
\EQn{\label{eq:energybound-mainterm-lowfrequency}
	&\int_3^T\absb{\int O\brkb{\nabla w\cdot\nabla v_{\loe 1} (w^2+wv+v^2)}\dx}\dt \\
	&\lsm \norm{\nabla w}_{L_{t}^\I L_x^2([3,T])} \norm{ v}_{L_{t}^2 L_x^\I([3,T])} \brkb{\norm{w}_{L_{t,x}^4([3,T])}^2+\norm{v}_{L_{t,x}^4([3,T])}^2} \\
	&\lsm N_0^{1/2}\cdot t_0^{10}\de\cdot N_0^{1/4}t_0^{3(s-2)} \lsm \de N_0^{3/4}.
}
Then, it suffices to consider the term
\EQn{\label{eq:energybound-mainterm}
\int_3^T\absb{\int O\brkb{\nabla w\cdot\nabla v_{\goe 1} (w^2+wv+v^2)}\dx}\dt.
}

By the frequency support property, we can divide it into three terms,
\EQnn{
\eqref{eq:energybound-mainterm} \lsm & \sum_{N\goe 1} \int_3^T\int\absb{\nabla v_N \cdot\nabla w w_{\gsm N}(v+w)}\dx\dt \label{eq:energybound-mainterm-relativelyhigh-w}\\
& + \sum_{N\goe 1} \int_3^T\int\absb{\nabla v_N \cdot\nabla w v_{\gsm N}(v+w)}\dx\dt \label{eq:energybound-mainterm-relativelyhigh-v}\\
& + \sum_{N\goe 1} \int_3^T\int\absb{\nabla v_N \cdot\nabla w_N \brkb{w_{\ll N}^2 + v_{\ll N}w_{\ll N} + v_{\ll N}^2}}\dx\dt. \label{eq:energybound-mainterm-transverse}
}
For \eqref{eq:energybound-mainterm-relativelyhigh-w}, from \eqref{esti:globalbound-w-inductionhypothesis},  \eqref{eq:globalbound-w-l44}, and the Sobolev inequality, we have for any $1\goe s>\frac56$,
\EQn{\label{esti:energybound-mainterm-relativelyhigh-w-1}
&\int_3^T\int\absb{\nabla v_N \cdot\nabla w w_{\gsm N}(v+w)}\dx\dt\\
\lsm & \norm{\nabla v_N}_{L_t^{\frac{3}{2}}L_x^\I([3,T])} \norm{\nabla w}_{L_t^\I L_x^2([3,T])}\norm{w_{\gsm N}}_{L_{t,x}^4([3,T])}^{\rev 3} \norm{w_{\gsm N}}_{L_t^\I L_x^4([3,T])}^{\frac23}\\
&\cdot \brkb{\norm{v}_{L_{t,x}^4([3,T])} + \norm{w}_{L_{t,x}^4([3,T])}}\\
\lsm & N_0^{\frac{2}{3}} t_0^{2(s-2)} \norm{\nabla v_N}_{L_t^{\frac{3}{2}}L_x^\I([3,T])} \norm{w_{\gsm N}}_{L_t^\I \dot{H}_x^{\frac34}([3,T])}^{\frac23} 
\lsm  N_0 t_0^{\frac{25}{12}(s-2)} \normb{\abs{\nabla}^{\frac56} v_N}_{L_t^{\frac{3}{2}}L_x^\I([3,T])}.
}
Hence, applying  Lemma \ref{lem:globalbound-v}, we have
\EQn{\label{esti:energybound-mainterm-relativelyhigh-w}
\eqref{eq:energybound-mainterm-relativelyhigh-w} \lsm  N_{0} \sum_{N\goe 1} N^{\frac56-s} t_0^{\frac{25}{12}(s-2)} \cdot t_0^{10}\de \lsm \de N_0.
}

For \eqref{eq:energybound-mainterm-relativelyhigh-v}, by H\"older's inequality, \eqref{esti:globalbound-w-inductionhypothesis}, \eqref{eq:globalbound-v-subonehalfregularity}, and \eqref{eq:globalbound-w-l2}, 
\EQn{
&\int_3^T\int\absb{\nabla v_N \cdot\nabla w v_{\gsm N}(v+w)}\dx\dt \\
\lsm & \norm{\nabla v_N v_{\gsm N}}_{L_t^1L_x^\I([3,T])} \norm{\nabla w}_{L_t^\I L_x^2([3,T])}\brkb{\norm{v}_{L_t^\I L_x^2([3,T])}+\norm{w}_{L_t^\I L_x^2([3,T])}} \\
\lsm & N_0^{1/2} N^{1-2s} t_0^{\frac{s-2}{2}} \norm{\abs{\nabla}^{s}v_N}_{L_t^2L_x^\I([3,T])} \norm{\abs{\nabla}^{s}v_{\gsm N}}_{L_t^2L_x^\I([3,T])} . 
}
Therefore, using \eqref{eq:globalbound-v-nablainfty} and \eqref{eq:globalbound-v-infty}, we have for any $1\goe s>\half 1$,
\EQn{\label{esti:energybound-mainterm-relativelyhigh-v}
	\eqref{eq:energybound-mainterm-relativelyhigh-v}\lsm \sum_{N\goe 1} N_0^{1/2} N^{1-2s} t_0^{\frac{s-2}{2}} \cdot (t_0^{10}\de)^2 \lsm \de^2 N_0^{1/2}.
}

Finally, we deal with the term \eqref{eq:energybound-mainterm-transverse}. Since the estimate of $v$ will not increase the bound of $N_0$, the worst case is as follows, and the others can be treated similarly: 
\EQn{\label{eq:energybound-mainterm-transverse-simple}
\sum_{N\goe 1} \int_3^T\int\absb{\nabla v_N \cdot\nabla w_N w_{\ll N} w_{\ll N}}\dx\dt.
}
By H\"older's inequality and \eqref{eq:globalbound-w-l44}, 
\EQn{\label{esti:energybound-mainterm-transverse-simple-1}
&\int_3^T\int\absb{\nabla v_N \cdot\nabla w_N w_{\ll N} w_{\ll N}}\dx\dt \\
\lsm & \norm{\nabla v_N}_{L_t^{\frac{52}{25}+}L_x^\I([3,T])} \norm{\nabla w_N w_{\ll N}}_{L_t^{\frac{8}{5}-}L_x^2([0,T])}^{\frac{2}{13}} \norm{\nabla w}_{L_t^\I L_x^2([3,T])}^{\frac{11}{13}} \norm{w_{\ll N}}_{L_t^{\frac{48}{11}}L_x^{\frac{48}{13}}([3,T])}^{\frac{24}{13}}.
}
Interpolating \eqref{eq:globalbound-w-l2} with \eqref{eq:globalbound-w-l44}, we have
\EQn{\label{esti:energybound-mainterm-transverse-simple-2}
\norm{w_{\ll N}}_{L_t^{\frac{48}{11}}L_x^{\frac{48}{13}}([3,T])} \lsm \norm{w}_{L_t^\I L_x^2([3,T])}^{\rev{12}} \norm{w}_{L_{t,x}^4([3,T])}^{\frac{11}{12}} \lsm N_0^{\frac{1}{8}} t_0^{\frac{17}{12}(s-2)}.
}
By Lemma \ref{lem:bilinearstrichartz},  \eqref{eq:globalbound-w-full}, and \eqref{eq:globalbound-w-full2}, we have
\EQn{\label{esti:energybound-mainterm-transverse-simple-3}
\norm{\nabla w_N w_{\ll N}}_{L_t^{\frac{8}{5}-}L_x^2([0,T])} \lsm & \sum_{N_1:N_1\ll N} N^{-\half 1} N\brkb{\norm{P_N w_0}_{L_x^2} +  \norm{P_N\brk{|u|^2u}}_{L_t^{2}L_x^{\frac65}([0,T])}} \\
& \cdot N_1^{\half 1-}\brkb{\norm{P_{N_1} w_0}_{L_x^2} +  \norm{P_{N_1}\brk{|u|^2u}}_{L_t^2L_x^{\frac65}([0,T])}} \\
\lsm & N^{-\half 1} N_0^{\frac{9}{4}-}t_0^{7(s-2)-}.
}
By \eqref{esti:globalbound-w-inductionhypothesis}, \eqref{esti:energybound-mainterm-transverse-simple-1},  \eqref{esti:energybound-mainterm-transverse-simple-2}, \eqref{esti:energybound-mainterm-transverse-simple-3}, and \eqref{eq:globalbound-v-nablainfty}, we have for any $1\goe s>\frac{12}{13}$,
\EQn{\label{esti:energybound-mainterm-transverse-simple}
	\eqref{eq:energybound-mainterm-transverse-simple} \lsm & N_0^{1-}\sum_{N\goe 1} N^{\frac{12}{13}-s} t_0^{\frac{48}{13}(s-2)-} \normb{\abs{\nabla}^{s} v_N}_{L_t^{\frac{52}{25}+}L_x^\I([3,T])} \\
	\lsm & N_0^{1-}\sum_{N\goe 1} N^{\frac{12}{13}-s} t_0^{\frac{48}{13}(s-2)-} \cdot t_0^{10}\de \lsm \de N_0.
}
Then, by \eqref{esti:energybound-mainterm-relativelyhigh-w}, \eqref{esti:energybound-mainterm-relativelyhigh-v} and \eqref{esti:energybound-mainterm-transverse-simple}, we have
\EQn{\label{esti:energybound-gradientv}
\eqref{eq:energybound-mainterm} \lsm \de N_0.
}

Therefore,  \eqref{esti:energybound-potential}, \eqref{esti:energybound-nogradientv}, \eqref{eq:energybound-mainterm-lowfrequency}, and \eqref{esti:energybound-gradientv} imply
\EQn{
	\sup_{t\in [3,T]}\E(w(t)) \loe \frac{101}{100} C_0N_0 + C(C_0)\de N_0 \loe \half 3 C_0N_0,
}
which contradicts to the definition of $T$. 
Then, we obtain \eqref{eq:globalbound-w-energy}.
\end{proof}

\begin{proof}[Proof of Theorem \ref{thm:main}]
Note that by \eqref{eq:globalbound-v-subonehalfregularity}, \eqref{eq:globalbound-w-l2} and Proposition \ref{prop:globalbound-w-energy}, we obtain that for some $C(N_0, \|\wt u_0\|_{\dot H_x^\frac12\cap \dot W_x^{s,1}})>0$, such that  
$$
\|u\|_{L^\infty_tH^\frac12_x(I)}\le C(N_0, \|\wt u_0\|_{\dot H_x^\frac12\cap \dot W_x^{s,1}}).
$$
Here $I$ is the maximal lifespan.
Then the global well-posedness and scattering follows directly from the conclusion in \cite{KM10TranAMS}. 
This finishes the proof of Theorem \ref{thm:main}.
\end{proof}

\section*{Acknowledgment}
The authors are very grateful to Professors Benjamin Dodson and Timothy Candy for helpful comments and discussions.  
J. Shen was supported by China Scholarship Council
201706010021. 
Y. Wu is partially supported by NSFC 11771325 and 11571118.

\end{document}